\newtheorem{theorem}{Theorem} 
\newtheorem{lemma}[theorem]{Lemma}
\newcommand{\bE}{\mathbb{E}}
\newcommand{\bF}{\mathbb{F}}
\newcommand{\bB}{\mathbb{B}}
\newcommand{\bN}{\mathbb{N}}
\newcommand{\bR}{\mathbb{R}}
\newcommand{\bG}{\mathbb{G}}
\newcommand{\bS}{\mathbb{S}}
\newcommand{\bV}{\mathbb{\hspace{-.07mm}V\hspace{-.2mm}}}
\newcommand{\cF}{\mathcal{F}}
\newcommand{\cG}{\mathcal{G}}
\newcommand{\cL}{\mathcal{L}}
\newcommand{\cT}{\mathcal{T}}
\newcommand{\unif}{\text{\rm unif}}
\newcommand{\var}{\text{\rm var}}
\newcommand{\todistr}{\to_{\text{\rm\tiny distr}}}
\newcommand{\E}{{\bE}}
\title[Randomly growing discrete structures]{Persisting randomness in randomly growing  discrete structures: graphs and
  search trees}
\author{Rudolf Gr\"ubel}
\affiliation{
         Leibniz Universit\"at Hannover, Germany} 
\begin{document}
\publicationdetails{18}{2015}{1}{1}{644}
\maketitle

\keywords{Boundary theory, Markov chains, random graphs, search trees.}

\maketitle

\begin{abstract} 
  The successive discrete structures generated by a sequential algorithm from random input constitute a Markov chain
  that may exhibit long term dependence on its first few input values. Using examples from random graph theory and
  search algorithms we show how such persistence of randomness can be detected and quantified with techniques from
  discrete potential theory. We also show that this approach can be used to obtain strong limit theorems in cases where 
  previously only distributional convergence was known.
\end{abstract}

\section{Introduction}\label{sec:intro}
Given a sequence $t_1,t_2,\ldots$ of input values, a sequential algorithm produces an output sequence $x_1,x_2,\ldots$,
where the next output $x_{n+1}$ depends on the current state $x_n$ and the next input $t_{n+1}$ only. For cases where
$x_n$ is a discrete structure, such as a permutation or a graph, and where the input values are realizations of
independent random variables with the same distribution, the output sequence is a Markov chain $X=(X_n)_{n\in\bN}$ that
is adapted to a combinatorial family $\bF$ in the sense that $X_n$ takes its values in the subset $\bF_n\subset \bF$ of
objects with base parameter $n$. Markov chains of this type often exhibit \emph{persisting randomness}: Informally, this
means that the influence of early values does not disappear as time goes by; formally, it means that the tail
$\sigma$-field $\cT(X)$ associated with $X$ is not trivial. Further, such chains eventually leave every fixed finite
subset of $\bF$ with probability~1, which leads to the related problem of finding a state space completion that captures
the information contained in $\cT(X)$. 

The classical example is the P\'olya urn: Initially, at time $n=1$, the urn contains one red and one blue ball. At time
$n=2,3,\ldots$, a ball is selected uniformly at random and put back, together with another ball of the same colour. Here
$\bF_n$ is the set of the pairs $(i,j)\in\bN_0\times\bN_0$ with $i+j+1=n$, where $i$ and $j$ are the number of balls of the
two colours added up to time $n\in\bN$. A suitable augmentation of the state space $\bF=\bN_0\times\bN_0$ is obtained by
regarding a sequence $((i_n,j_n))_{n\in\bN}$ with $i_n+j_n\to\infty$ as convergent if and only if $(i_n+1)/(i_n+j_n+2)$
(the proportion of red balls) tends to a value $\alpha\in [0,1]$ as $n\to\infty$, which leads to a state space
completion $\bar\bF$ that may be represented by $\bF\cup [0,1]$. With respect to this convergence we have $X_n\to
X_\infty$ almost surely, where $X_\infty$ generates the (non-trivial) tail $\sigma$-field~$\cT(X)$.

Discrete potential theory provides a general method for the construction of such state space boundaries. This was
initiated in a fundamental paper by~\cite{Doob59} and has been applied to P\'olya urns by~\cite{BK1964}. A recent textbook treatment
is given in~\cite{Woess2}; see also the survey by \cite{Saw97}.  Many authors have used boundary theory for the
analysis of random walks on discrete structures; see~\cite{KaimVersh} for a very influential review, and the more recent
monograph~by \cite{Woess1}. 

In the present paper we regard the discrete structures themselves as states of a stochastic
process.  Some standard search algorithms have recently been investigated from this point of view in~\cite{EGW1}, and
the results have been used in~\cite{GrMtree} to prove strong limit theorems for functionals of the output sequence,
such as the path length or Wiener index of search trees, that have attracted the attention of many researchers.  
We address the phenomenon of randomness persistence with these tools, specifically in connection with some
popular models for random graphs, and for search algorithms. Further, we show that the method can be used to obtain
almost sure convergence for the structures themselves or for functionals of the structures in cases where previously
only convergence in distribution was known.

In the next section we provide some background on discrete potential theory and the Doob-Martin compactification.  In
order to keep this short we restrict ourselves to combinatorial Markov chains, where the state space is graded by the
time parameter.  Section~\ref{sec:graph} gives some elementary examples from random graph theory; the present author is
not aware of any previous use of the Doob-Martin compactification in connection with (general) graph limits.  In
Section~\ref{sec:BST} we consider search trees, where we can build on the work of~\cite{EGW1} and~\cite{GrMtree}.  In a final
section we collect some comments on related work and provide further pointers to the literature.

We hope that such results contribute to the theoretical understanding of the growth models and algorithms. 
From an entirely practical point of view persisting randomness should be of interest as a strong dependence on the first
few input values may be an entirely unwelcome aspect of an algorithm that the practitioner may have to address, for
example by an additional randomization step.

\section{An ultrashort summary of Markov chain boundary theory}%
\label{sec:summ}

A Markov chain is a sequence $X=(X_n)_{n\in\bN}$ of random variables that take their values in some countable
set $\bF$, the state space, such that the Markov property holds,
\begin{equation}\label{eq:Markov}
  P(X_{n+1}=x_{n+1}|X_n=x_n,\ldots,X_1=x_1)\; =\;   P(X_{n+1}=x_{n+1}|X_n=x_n) 
\end{equation}
for all $n\in\bN$, $x_1,\ldots,x_{n+1}\in\bF$. In the cases we are interested in there will be a canonical state
$e\in\bF$ with $X_1=e$, and the transitions are homogeneous in time, which means that for some 
function $p:\bF\times\bF\to [0,1]$, 
\begin{equation*}
  P(X_{n+1}=y|X_n=x) = p(x,y)\quad\text{for all } x,y\in\bF, \; n\in\bN. 
\end{equation*}
These are the transition probabilities; together with the starting point $e$ they determine the distribution 
of the stochastic process $X$. We also assume that
\begin{equation}\label{eq:weakirr}
  P(X_n=x\ \text{ for some }n\in\bN) \, > \, 0 \ \text{ for all } x\in \bF.
\end{equation}
In words:  Every state has a chance to be visited---the chain is weakly irreducible.

Boundary theory provides an approach to the asymptotics of chains that `leave the state space' in the
sense that $\lim_{n\to\infty} P(X_n\in S)=0$ for every finite set $S\subset \bF$. It gives
the `right' extension (completion, compactification) $\bar \bF$ of the state space, in the sense that
\begin{equation}\label{eq:aslimit}
  X_n\to X_\infty \ \text{ as } n\to\infty\ \text{ with probability } 1
\end{equation}
for some random variable $X_\infty$ with values in the boundary $\partial\bF$ of $\bF$ in $\bar\bF$,
and that 
\begin{equation}\label{eq:tailgen}
          \sigma(X_\infty) =_{\text{\rm a.s.}} \cT(X) := \bigcap_{n=1}^\infty \sigma\bigl(\{X_m:\, m\ge n\}\bigr).
\end{equation}
In words: The limit generates the tail $\sigma$-field of the process, up to null
sets. For property~\eqref{eq:tailgen} we assume that $X$ has the space-time property, by which we mean
that each state can be visited at one particular point in time only. The combinatorial Markov chains 
in Section~\ref{sec:intro} are such space-time processes.

This feat is achieved by the Doob-Martin compactification, where we regard a sequence $(y_n)_{n\in\bN}\subset\bF$ as
convergent if the conditional probabilities $P(X_1=x_1,\ldots,X_m=x_m|X_n=y_n)$ converge as $n\to\infty$ for all fixed
$m\in\bN$, $x_1,\ldots,x_m\in \bF$. Due to the Markov property~\eqref{eq:Markov} the construction can be based on the 
Martin kernel $K$,
\begin{equation*}
  K(x,y) :=\frac{P(X_n=y| X_m=x)}{P(X_n=y)},
    \quad x,y \in \bF,\, n>m, 
\end{equation*}
where $m$ and $n$ are the time values associated with the states $x$ and $y$ respectively; here 
weak irreducibility~\eqref{eq:weakirr} is important. Indeed, manipulations of elementary 
conditional probabilities lead to
\begin{equation*}
  P(X_1=x_1,\ldots,X_m=x_m|X_n=y_n) = K(x_m,y_n)\, P(X_1=x_1,\ldots,X_m=x_m),
\end{equation*}
which connects the convergence condition on the conditional probabilities to the convergence of the values of the
Martin kernel. We mention in passing that this approach to Doob-Martin convergence is
equivalent to the usual approach via potential kernels; see also~\cite[Section~3]{EGW1} and ~\cite[Section~2]{EGW2}.

From a general point of view, any family $\cF$ of functions $f:\bF\to\bR$ that separates the points of $\bF$ leads to 
an embedding of $\bF$ into the space $\bR^\cF$ of functions from $\cF$ to $\bR$ via
\begin{equation*}
  x\, \mapsto \, \bigl( f\mapsto f(x)\bigr).
\end{equation*}
On $\bR^\cF$ we use the topology of pointwise convergence.  
If all $f\in\cF$ are continuous (which they automatically are if we endow $\bF$ with the discrete topology)
and bounded, then the embedding is continuous and its range is a product of bounded intervals, hence compact by
Tychonov's theorem. This is a variant of the Stone-{\v C}ech compactification, see \cite[p.152f]{Kel55}. In this construction, 
all $f\in\cF$ have a unique continuous extension to the whole of the compactified space. Alternatively, for a countable
family $\cF$, a suitable metric can be defined on $\bF$ using these functions, such that the associated completion 
has these properties; see~\cite[p.187]{Woess2}.

In our present setup, the Doob-Martin compactification arises by taking $\cF$ to be the set of functions $y\mapsto K(x,y)$, $x\in\bF$.
We use the same symbol for the extended functions
and denote boundary elements by lower case Greek letters. With this construction, \eqref{eq:aslimit} 
and~\eqref{eq:tailgen} are satisfied. In addition, we have the following remarkable properties:
First, all non-negative harmonic functions $h:\bF\to\bR$ can be written as mixtures of
the functions $K(\cdot,\alpha)$, $\alpha\in\partial\bF$. To be precise we recall that $h:\bF\to \bR$
is harmonic if $h(x)=\sum_{y\in \bF} p(x,y)h(y)$ for all $x\in \bF$. Then for each such $h$ with 
$h\ge 0$ and $h(e)=1$ there is a probability measure $\mu_h$ on (the Borel subsets of) the 
boundary $\partial\bF$ such that
\begin{equation}\label{eq:reprharm}
  h(x) = \int K(x,\alpha)\, \mu_h(d\alpha)\ \  \text{ for all } x\in \bF.
\end{equation}
The distribution $\mu_1$ of the limit $X_\infty$ represents the (trivial) harmonic function $h \equiv 1$. 
Secondly, conditioned on a limit value $X_\infty=\alpha$, the process is again a Markov chain, with transition 
probabilities $p^h$ given by
\begin{equation}\label{eq:htransf}
  p^h(x,y) \, =\,  \frac{1}{K(x,\alpha)}\, p(x,y) K(y,\alpha).
\end{equation}
This is an instance of Doob's $h$-transform, with $K(\cdot,\alpha)$ the corresponding harmonic function $h$. Of course,
the interpretation of these transforms by a conditioning on the final value is a natural consequence of the initial idea
of conditioning on the values $y_n$ at time $n$ and then letting $n$ tend to $\infty$.

As it is central to our theme of persisting randomness we briefly explain why (and how)~$X_\infty$ generates the tail
$\sigma$-field, up to null sets, that is, why property~\eqref{eq:tailgen} holds. 

The limit is obviously
$\cT(X)$-measurable, which means that $\sigma(X_\infty)\subset \cT(X)$.
For the other direction we need, for each tail event $A$, a Borel subset $B$ of $\partial\bF$ such that 
\begin{equation}\label{eq:XandT}
  P\bigl(A \bigtriangleup X_\infty^{-1}(B)\bigr) \, =\,  0,
\end{equation}
where $X_\infty^{-1}(B):=\{\omega\in\Omega:\, X_\infty(\omega)\in B\}$. Let $A\in \cT(X)$ and let $1_A$ 
be the associated indicator function;
we may assume that $\kappa:=P(A)>0$. As the state space is graded in the sense that it can be written 
as the disjoint union of the `slices' $\bF_n$ of states that are possible at time $n$, we can define $h:\bF\to [0,\infty)$ by
setting $h(x)=\kappa^{-1}P(A|X_n=x)$ for $x\in\bF_n$, $n\in\bN$. With~\eqref{eq:Markov} it follows that $h$
is harmonic, and it turns out that the measure $\mu_h$ representing $h$ as in~\eqref{eq:reprharm} has a 
density $\Phi$ with respect to the distribution $\mu_1$ of $X_\infty$. The set required in~\eqref{eq:XandT}
can now be given as $B=\Phi^{-1}(\{\kappa^{-1}\})$. 

In particular, if the distribution of $X_\infty$ is concentrated on a single value of the boundary
then $\cT(X)$ is $P$-trivial, so randomness `disappears in the limit'.

\section{Graph limits}%
\label{sec:graph}

Our basis in this section is the recent monograph by~\cite{LovaszBook}, which also gives references to the original
research articles. Let $\bG[n]$ be the set of simple graphs
$G=(V,E)$ with vertex set $V=[n]:=\{1,\ldots,n\}$. The set $\bG[1]$ has only one element, the graph $e=G_1$ with the
single node~1 and no edges.  A number of popular models for randomly growing graphs fits into the framework of
combinatorial Markov chains, with state space $\bF=\bG:=\bigcup_{n=1}^\infty\bG[n]$ and start at 
$G_1$. We work out the boundary for two of them, the uniform attachment process, and the Erd{\H o}s-R\'enyi graphs, 
where we consider two variants of the latter.  We note that the state space compactifications are 
abstract constructions so that the only uniqueness that we may expect is up to homeomorphisms; usually there are
many possibilities for a concrete description.

On its own the question of how to define limits of finite graphs, interpreted as the search for a completion or compactification of
the countable set $\bG$, does not involve any probability and, of course, it can have quite different answers depending
on the specific circumstances. For example, we might distinguish between sparse and dense graphs, referring to
the rate of growth of the number $e(G_n)$ of edges $E(G_n)$ in relation to the number $v(G_n)$ of vertices $V(G_n)$ of
$G_n$ in a sequence $(G_n)_{n\in\bN}\subset \bG$. For the dense case the notion of subgraph sampling has turned out
to be important (there are several equivalent definitions):  For two graphs $G,H\in\bG$ let $t(H,G)$ be the number of 
possibilities to embed $H$ into $G$ or, more formally, with $\Gamma(H,G)$ the set of injective functions 
$\phi:V(H)\to V(G)$, let
\begin{equation}\label{eq:tdef}
     T(H,G):=\bigl\{\phi\in\Gamma(H,G):\, \{\phi(i),\phi(j)\}\in E(G)\Leftrightarrow \{i,j\}\in E(H)\bigr\},
\end{equation}
\begin{equation}\label{eq:tdefrho}
 t(H,G) := \#T(H,G),\quad   \rho(H,G)\; :=\; \frac{(v(G)-v(H))!}{v(G)!}\, t(H,G).
\end{equation}
We then say that a sequence $(G_n)_{n\in\bN}$ converges if for all $H\in\bG$ the relative number $\rho(H,G_n)$ of these
possibilities converges as a sequence of real numbers. The value $\rho(H,G_n)$ can be interpreted as the probability
that, choosing $m=v(H)$ elements of $V(G_n)$ randomly and without replacement, the subgraph of $G_n$ induced on these
nodes is isomorphic to $H$. The convergence may be rephrased in a somewhat abstract manner: We define an embedding of
$\bG$ into the set $[0,1]^\bG$ of functions on $\bG$ with values in the unit interval by
\begin{equation}\label{eq:embedsampling}
  G\, \mapsto\, \bigl(H\mapsto \rho(H,G)\bigr), \quad G\in\bG,
\end{equation}
and then consider the closure of the range of the embedding as a compactification of~$\bG$. Note that
the function space is compact with respect to pointwise convergence by Tychonov's theorem. Viewed this way, 
the similarity to the Doob-Martin compactification becomes apparent, where we use the embedding
\begin{equation}\label{eq:embedMartin}
   x\, \mapsto\, \bigl(y\mapsto K(x,y)\bigr), \quad x\in\bF,
\end{equation}
based on the Martin kernel instead. 
  
Returning to the Markov chain models of randomly growing graphs, we first consider the \emph{uniform attachment} model;
see~\cite[Example~11.39]{LovaszBook}. In order to describe its dynamics suppose that we are in state $G_n\in\bG[n]$ at
time $n$. We then construct $G_{n+1}\in\bG[n+1]$ by adding those edges $\{i,j\}\subset [n+1]$ not (yet) in $G_n$ with
probability $1/(n+1)$, independently of each other. Let $X=(X_n)_{n\in\bN}$ be the corresponding Markov chain, which has
state space $\bG$ and starts at $G_1$. 

Let $\Delta:=\{\{i,j\}\in\bN\times \bN: \, 1\le i < j\}$. For each $\{i,j\}\in\Delta$ we define the edge indicator
$1_{\{i,j\}}:\bG\to \{0,1\}$ to have the value~1 for a graph $G$ if $\{i,j\}$ is an element of the edge set $E(G)$
of~$G$, and~0 otherwise. Recall that $X_n= (V (X_n), E(X_n))$ is a random graph with $V(X_n)=[n]$; let $M_n$ 
be the corresponding (random) 
adjacency matrix. Expressed in graph theoretical terms, part~(a) of the following result shows that the
Doob-Martin convergence associated with the uniform attachment graphs is the same as pointwise convergence 
of the adjacency matrices $M_n$. The limit $M$ may be regarded as the adjacency matrix of the 
limit graph $X_\infty= (V (X_\infty), E(X_\infty))$,  with vertex set $V(X_\infty)=\bN$.

\begin{theorem}\label{thm:uag}
\emph{(a)} The Doob-Martin boundary of the uniform attachment process $X$ consists of  the set $\{0,1\}^\Delta$, where
convergence of a sequence $(G_n)_{n\in\bN}$ of graphs $G_n\in\bG[n]$ to a limit $M\in \{0,1\}^\Delta$ means
that the edge indicators $1_{\{i,j\}}(G_n)$ converge to $M(i,j)$ as $n\to\infty$, for each $\{i,j\}\in\Delta$.

\smallbreak
\emph{(b)} With probability~1, $X_\infty$ is equal to $M\equiv 1$. 
\end{theorem}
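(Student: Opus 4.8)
The plan is to compute the Martin kernel $K(x,y)$ explicitly, read off its limiting behaviour to identify the boundary in part~(a), and then derive part~(b) from a one-line first-moment estimate on individual edges.

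First I would exploit the fact that under the uniform attachment dynamics the edge slots evolve independently. Write $m=v(x)$ and $n=v(y)$ with $n>m$. Since the per-transition survival (non-insertion) probability of a live slot at the step creating the vertex set $[\ell]$ is $1-1/\ell$, a slot $\{i,j\}$ is absent at time $n$ with probability $\prod_{\ell=j}^{n}(1-1/\ell)=(j-1)/n$, and is absent at time $n$ given absence at time $m$ (for $j\le m$) with probability $\prod_{\ell=m+1}^{n}(1-1/\ell)=m/n$. Splitting the slots $\{i,j\}\in\Delta$ with $j\le n$ into three classes — those in $E(x)$, those with $j\le m$ but $\{i,j\}\notin E(x)$, and those with $j>m$ — I would obtain product formulas for $P(X_n=y\mid X_m=x)$ and $P(X_n=y)$ and take their quotient. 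In the quotient the slots with $j>m$ cancel exactly, since conditioning on $X_m=x$ carries no information about vertices not yet born; a slot in $E(x)$ contributes $\frac{n}{n-j+1}$; and a slot with $j\le m$, $\{i,j\}\notin E(x)$ contributes $\frac{n-m}{n-j+1}$ if $\{i,j\}\in E(y)$ and $\frac{m}{j-1}$ if not (with $K(x,y)=0$ whenever $E(x)\not\subset E(y)$). Thus $K(x,y)$ is a finite product over the at most $\binom{m}{2}$ slots inside $[m]$.

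Next I would let $n\to\infty$ along a sequence $y_n\in\bG[n]$. For fixed $x$ every factor carrying a surviving $n$ tends to $1$, the sole exception being the constant factor $\frac{m}{j-1}>1$ attached to a slot $\{i,j\}$ with $j\le m$, $\{i,j\}\notin E(x)$ that is \emph{absent} in $y_n$. Hence $K(x,y_n)$ converges as soon as each edge indicator $1_{\{i,j\}}(y_n)$ stabilizes, with limit $K(x,M)=\prod\frac{m}{j-1}$ over the slots $\{i,j\}$, $j\le m$, $\{i,j\}\notin E(x)$, $M(i,j)=0$ — and $K(x,M)=0$ whenever $M(i,j)=0$ for some $\{i,j\}\in E(x)$. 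To see that indicator convergence is also necessary and that distinct limits give distinct kernels, I would probe a single slot $\{i_0,j_0\}$ with the one-edge graph $x_0$ on $[j_0]$ whose only edge is $\{i_0,j_0\}$: here $K(x_0,y_n)=0$ when $\{i_0,j_0\}\notin E(y_n)$ and is bounded below by a positive constant when $\{i_0,j_0\}\in E(y_n)$, so convergence of $K(\cdot,y_n)$ forces $1_{\{i_0,j_0\}}(y_n)$ to stabilize, and the value $M(i_0,j_0)$ is recovered from whether the limit $K(x_0,M)$ vanishes. This identifies $\partial\bF$ with $\{0,1\}^\Delta$ and Doob-Martin convergence with entrywise convergence of the adjacency matrices, proving~(a).

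Finally, for part~(b) I would argue edge by edge. A slot is never vacated once filled, so the events $\{1_{\{i,j\}}(X_n)=0\}$ decrease in $n$; by the estimate above $P\bigl(1_{\{i,j\}}(X_n)=0\bigr)=(j-1)/n\to0$, so each edge is present from some time on, almost surely. A countable intersection over $\{i,j\}\in\Delta$ then gives $M\equiv1$ with probability~$1$, consistent with the remark in Section~\ref{sec:summ} that a one-point limit distribution forces a trivial tail. The step I expect to be most delicate is the bookkeeping in the second paragraph: correctly classifying each slot relative to both $x$ and $y$ and verifying the clean cancellation of the $j>m$ slots, since any slip there would propagate into a wrong limiting kernel. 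Once the product formula is secured, the limit analysis and part~(b) are short.
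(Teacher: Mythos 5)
Your proposal is correct and follows essentially the same route as the paper: an explicit Martin-kernel computation exploiting the independent evolution of the edge slots, the observation that for fixed $x$ only the constant factors $m/(j-1)$ attached to slots absent from both $x$ and $y_n$ survive the limit, the single-edge probe graphs $F_{i,m}$ to show that Doob--Martin convergence forces each edge indicator to stabilize, and the first-moment estimate $P(\{i,j\}\notin E(X_n))=(j-1)/n\to 0$ together with monotonicity for part (b). The only difference is cosmetic: you factor the kernel slot by slot, whereas the paper groups the factors by column and writes $K=K_1K_2K_3$ with exponents $e_j(G_m)$ and $e_j(G_n)$; the two bookkeepings agree.
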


\begin{proof}
We compute the Martin kernel. Let $G_m\in\bG[m]$, $G_n\in\bG[n]$, $m< n$, be such that $E(G_m)\subset E(G_n)$. 
For any $\{i,j\}\in  E(G_n)\setminus E(G_m)$ and any $l\in \{m+1,\ldots,n\}$ let $q_{i,j,l}$ be the probability 
that this edge appears in the $X$-sequence from time $l$ onwards. Clearly, if $i<j\le m$, and with the understanding
that an empty product has the value~1,
\begin{align*}
  P\bigl(\{i,j\}\in E(X_n)\big| X_m=G_m\bigr)
        \ &=\ \sum_{l=m+1}^n q_{i,j,l} \\
           &=\ \sum_{l=m+1}^n \biggl(\prod_{k=m+1}^{l-1}\Bigl(1-\frac{1}{k}\Bigr)\biggr) \frac{1}{l} \\
           &=\ \sum_{l=m+1}^n \frac{m}{l(l-1)} \quad
            =\quad 1-\frac{m}{n}.
\end{align*}
If $i<j$ and $j>m$,
\begin{equation*}
   P\bigl(\{i,j\}\in E(X_n)\big| X_m=G_m\bigr)\; =\; \sum_{l=j}^n q_{i,j,l} \; =\; 1-\frac{j-1}{n}.
\end{equation*}
Let $E_j(G):=\{1\le i <j:\, \{i,j\}\in E(G)\}$ and $e_j(G):=\# E_j(G)$. In order to go from $G_m$ 
to $G_n$ the edges in $E_j(G_n)\setminus E_j(G_m)$ with $j=2,\ldots,m$  and those in $E_j(G_n)$ 
with $j=m+1,\ldots,n$ have to enter the 
graph at some time $l\in\{m+1,\ldots,n\}$, and the edges $\{i,j\}$, $1\le i <j$, 
not in $E_j(G_n)$, $j=2,\ldots,n$, must remain unchosen. Hence,  by independence, 
\begin{align*}
   P(X_n=G_n | X_m=G_m)\
       &= \ \prod_{j=2}^m \Bigl(1-\frac{m}{n}\Bigr)^{e_j(G_n)-e_j(G_m)}\Bigl(\frac{m}{n}\Bigr)^{j-1-e_j(G_n)}\\
       &\hspace{1.5cm}    \cdot \prod_{j=m+1}^n \Bigl(1-\frac{j-1}{n}\Bigr)^{e_j(G_n)}\Bigl(\frac{j-1}{n}\Bigr)^{j-1-e_j(G_n)}.
\end{align*}
Using this with $m=1$ we get
\begin{equation*}
  P(X_n=G_n)\; =\; \prod_{j=2}^n \Bigl(1-\frac{j-1}{n}\Bigr)^{e_j(G_n)}\Bigl(\frac{j-1}{n}\Bigr)^{j-1-e_j(G_n)}.
\end{equation*}
Taking ratios we arrive at
\begin{equation*}
  K(G_m,G_n) \, = \,   K_1(G_m,G_n) \cdot  K_2(G_m,G_n)  \cdot  K_3(G_m,G_n) 
\end{equation*}
with
\begin{equation*}
  K_1(G_m,G_n) := \prod_{j=2}^m \Bigl(1-\frac{m}{n}\Bigr)^{-e_j(G_m)} ,\quad
  K_2(G_m,G_n) := \prod_{j=2}^m \Bigl(1-\frac{m+1-j}{n+1-j}\Bigr)^{e_j(G_n)} ,
\end{equation*}
 and
\begin{equation*}
   K_3(G_m,G_n) := \prod_{j=2}^m \Bigl(\frac{j-1}{m}\Bigr)^{e_j(G_n)+1-j}  .
\end{equation*}
Because of $e_j(G)\le j-1$ for all $G\in\bG$, the first two of these factors will converge as 
$n\to\infty$ with $m$ fixed, and the respective limits will always be~1.  

Now let $F_{i,m}$, $1\le i < m$, be the graph with node set $[m]$ 
and a single edge $\{i,m\}$. If this edge appears in $G_n$, then
\begin{equation*}
  K_3(F_{i,m},G_n)\, = \, \prod_{j=2}^m \Bigl(\frac{j-1}{m}\Bigr)^{e_j(G_n)+1-j} 
                        \ \ge \ 1
\end{equation*}
in view of $e_j(G_n)\le j-1$.  As $K(F_{i,m},G_n)=0$ if $\{i,m\}\notin E(G_n)$ this means that Doob-Martin convergence
implies that the limits
\begin{equation}\label{eq:uagcond2}
  M(i,m) := \lim_{n\to\infty} 1_{\{i,m\}}(G_n)\ \in \ \{0,1\}
\end{equation}
exist for all $\{i,m\}\in\Delta$. On the other hand, from \eqref{eq:uagcond2} we obtain the existence of the limits
\begin{equation}\label{eq:uagcond1}
    \lim_{n\to\infty} e_j(G_n) \ \in \ \{0,\ldots,j-1\}
\end{equation}
for all $j\in\bN$, $j\ge 2$, which in turn implies the convergence of $K_3(G_m,G_n)$. 
Taken together this characterizes Doob-Martin convergence as stated in part (a).
In~\eqref{eq:uagcond2} and~\eqref{eq:uagcond1} convergence means that the sequence elements
do not change from some index $n$ onwards.

For the proof of part (b) let $\tau_{ij} := \inf\bigl\{n\in\bN:\, \{i,j\}\in E(X_n)\bigr\}$ 
be the entry time of the edge $\{i,j\}$; we need to show that $P(\tau_{ij}<\infty)=1$ 
for all $\{i,j\}\in\Delta$. This, however, is an easy consequence of the construction of $X$ as we have, for $n\ge j$,
\begin{equation*}
  P(\tau_{ij}> n)\; = \; \prod_{l=j}^n P(\tau_{ij} >l \,|\, \tau_{ij}\ge l)
                          \; =\; \prod_{l=j}^n \Bigl( 1-\frac{1}{l+1}\Bigr) \; =\; \frac{j}{n+1}. 
\end{equation*}
\end{proof}

As a consequence of part~(b) of the theorem, the tail $\sigma$-field of the uniform attachment process is trivial.
Further, using~\eqref{eq:htransf}, the chain conditioned on some limit value $M\in \{0,1\}^\Delta$ 
can easily be described as follows: We proceed as before, but only edges $\{i,j\}\in\Delta$ with $M(i,j)=1$ 
are allowed to enter.

\bigbreak
\textbf{Remark } (a) An embedding interpretation as in~\eqref{eq:embedsampling} and~\eqref{eq:embedMartin}
of the topology in Theorem~\ref{thm:uag} results if we identify a graph with the values of the edge indicators,
\begin{equation}\label{eq:embeduag}
  G\, \mapsto\, \bigl(\{i,j\}\mapsto 1_{\{i,j\}}(G)\bigr), \quad G\in\bG.
\end{equation}
Here the boundary is the full function space $\{0,1\}^\Delta$, which is usually not the case.  

\smallbreak
(b) The graph sequence generated by the uniform attachment model converges in the sampling topology too, and in fact to
what is arguably a more interesting limit; see~\cite[Proposition~11.40]{LovaszBook}. However, this `more
global' topology does not capture the tail information. To be specific, consider a random variable $\xi$ with values in
$\bN$ and define a random element $\alpha$ of the boundary by $\alpha(i,j)=0$ if $j=\xi$ and $\alpha(i,j)=1$ otherwise,
and let $Y$ be the corresponding $h$-transform. In $Y$ the random node $\xi$ remains isolated forever. Then 
$\cT(Y)=_{\text{\rm\tiny a.s.}} \sigma(\xi)$, which means that some randomness persists. The `local' topology in
Theorem~\ref{thm:uag} detects this, whereas from the global point of view
$Y$ and the original chain $X$ are asymptotically indistinguishable.

\smallbreak
(c) Whereas~\eqref{eq:embeduag} is an embedding in the strict sense of being one-to-one, \eqref{eq:embedsampling}
is not: If $G$ and $G'$ are of the same isomorphism type,  then the functions $\rho(\cdot,G)$ and $\rho(\cdot,G')$
coincide; see also Theorem~5.29 and its proof in \cite{LovaszBook}.
\hfill$\triangleleft$  

\bigbreak

The second model that we consider is perhaps the most famous of all random discrete structures: To obtain the
Erd{\H o}s-R\'enyi-Gilbert graph~\cite[p.8]{LovaszBook} or binomial random graph~\cite[p.2]{buchJLR} $Y_n$
with node set $[n]$ and parameter $\theta\in [0,1]$ we include each of the $\binom{n}{2}$ possible edges with 
probability $\theta$, independently of each other. In contrast to the uniform attachment model discussed above the 
variables $Y_n$ are now
defined for each $n$ separately, with
\begin{equation}\label{eq:distY}
  P(Y_n =G_n)\; =\; \theta^{e(G_n)} (1-\theta)^{\binom{n}{2}-e(G_n)}\quad \text{for all } G_n\in \bG[n],
\end{equation}
but there is a well-known and canonical method to combine these into a Markov chain
$Y=(Y_n)_{n\in\bN}$: In order to move from $Y_n$ to $Y_{n+1}$ we add the node $n+1$ and then, 
independently of each other, each of the edges $\{i,n+1\}$, $1\le i\le n$, with probability $\theta$.  

A moment's thought reveals that, in this model, none of the randomness will ever go away, which is the other extreme as
compared with tail triviality. Indeed, it is possible to reconstruct the complete sequence $G_1,\ldots,G_n$ from its
last element $G_n$, which implies that $\cT(Y)=\sigma(Y)$. Roughly, for such chains with perfect memory `the sequence is
the limit'; see also~\cite[Section~9]{EGW1}. In order to formalize this, let $\Psi_m^n(G_n)$ be the graph in $\bG[m]$
that $G_n\in\bG[n]$ induces on $[m]$, that is, we delete the nodes $m+1,\ldots,n$ and the incident edges. This defines a
family $\{\Psi_m^n:\, 1\le m\le n <\infty\}$ of functions $\Psi_m^n:\bG[n]\to \bG[m]$ that is consistent in the sense
that $\Psi^m_l\circ\Psi^n_m=\Psi^n_l$ whenever $l\le m\le n$.  Let $\bG_{\text{\rm proj}}\subset \bG[1]\times
\bG[2]\times \bG[3]\times \cdots$ be the set of all sequences $(G_n)_{n\in\bN}$ with the properties that $G_n\in\bG[n]$
for all $n\in\bN$, and $\Psi_m^n(G_n)=G_m$ for all $m,n \in\bN$ with $m<n$. This set is known as the
projective (or inverse) limit associated with the sequence $(\bG[n])_{n\in\bN}$ and the family 
$\{\Psi_m^n:\, 1\le m\le n <\infty\}$.  
In the set of sequences, we regard a sequence (of sequences) as convergent if the respective elements at any particular
position $l\in\bN$ `freeze', i.e.~converge in the discrete topology on~$\bG[l]$. With the discrete topology the
individual components are compact in view of $\#\bG[l]<\infty$, so that their (infinite) product is compact. 
The set $\bG_{\text{\rm proj}}$ is closed therein, hence compact too.

There is a slightly different point of view that connects this abstract procedure to the material 
in the next section and that is also useful for the description of probability measures on the projective
limit: The transition graph of a perfect memory chain on $\bG$ is a rooted and locally finite tree,
with root $G_1$ and directed edges $(G_n,G_{n+1})$, $G_n=\Psi^{n+1}_n(G_{n+1})$. The projective limit 
then coincides with the boundary of the ends compactification of the transition tree. For a node $G$ of this tree with $k$ vertices 
let
\begin{equation*}
  A_G := \bigl\{(G_n)_{n\in\bN}\in\bG_{\text{\rm proj}}: \, G_k = G\bigr\}
\end{equation*}
be the set paths through $G$.  It is easy to see that a probability measure $\mu$ on the ends compactification is
completely specified by the values $\mu(A_G)$, $G\in\bG$.

\begin{theorem}
\emph{(a)} The boundary of the Doob-Martin compactification of $\bG$ with respect to $Y$ is given by the 
projective limit $\bG_{\text{\rm proj}}$. 

\smallbreak
\emph{(b)} The distribution $\mu$ of $Y_\infty$ is given by $\mu(A_G) = P(Y_m=G)$ if $G\in\bG[m]$.
\end{theorem}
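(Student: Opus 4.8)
The plan is to compute the Martin kernel directly, exploiting the perfect-memory structure already emphasized before the theorem. First I would write down the conditional transition probabilities. Since passing from $Y_m$ to $Y_n$ leaves the induced subgraph on $[m]$ untouched, $P(Y_n=G_n\mid Y_m=G_m)$ is nonzero only when $\Psi_m^n(G_n)=G_m$, and in that case only the $\binom{n}{2}-\binom{m}{2}$ edge slots incident to the new nodes $m+1,\dots,n$ have to be decided, of which $e(G_n)-e(G_m)$ are present. Combining this with the marginal law~\eqref{eq:distY} and taking the ratio, every factor depending on $G_n$ cancels (in particular $\binom{n}{2}$ disappears), leaving
\[
  K(G_m,G_n)=\frac{1}{P(Y_m=G_m)}\quad\text{if }\Psi_m^n(G_n)=G_m,
\]
and $K(G_m,G_n)=0$ otherwise. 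The decisive feature, which drives everything else, is that this value does not depend on $n$ and depends on $G_n$ only through its restriction $\Psi_m^n(G_n)$.

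For part (a) I would then characterize Doob-Martin convergence of a sequence $(G_n)$, $G_n\in\bG[n]$, as convergence of $n\mapsto K(G_m,G_n)$ for every fixed $G_m$. Fixing $m$, exactly one element of $\bG[m]$, namely $\Psi_m^n(G_n)$, yields the nonzero value $1/P(Y_m=G_m)$ at stage $n$; since $\bG[m]$ is finite and the admissible nonzero values are distinct positive constants, convergence for all $G_m$ is equivalent to the restriction $\Psi_m^n(G_n)$ being eventually constant in $n$, say equal to $H_m$. The consistency relation $\Psi_l^m\circ\Psi_m^n=\Psi_l^n$ then forces $\Psi_l^m(H_m)=H_l$ for $l<m$, so $(H_m)_{m\in\bN}\in\bG_{\text{\rm proj}}$; conversely, for any $(H_m)\in\bG_{\text{\rm proj}}$ the sequence $G_n:=H_n$ has frozen restrictions and converges to it. Distinct projective-limit points differ in some coordinate and are therefore separated by the kernels, so the boundary is in bijection with $\bG_{\text{\rm proj}}$, and the convergence criterion is precisely the ``freezing of coordinates'' topology described before the theorem.

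For part (b) I would use that along the actual chain the restrictions are frozen from the outset: by construction $\Psi_m^n(Y_n)=Y_m$ almost surely for all $n\ge m$, so the $m$th coordinate of the a.s.\ limit $Y_\infty$ equals $Y_m$. Hence $\{Y_\infty\in A_G\}=\{Y_m=G\}$ for $G\in\bG[m]$, which gives $\mu(A_G)=P(Y_m=G)$. As a cross-check one can pass to the kernel limits $K(G_m,\alpha)=1_{A_{G_m}}(\alpha)/P(Y_m=G_m)$ and insert them into the representation $h(G_m)=\int K(G_m,\alpha)\,\mu(d\alpha)$ for $h\equiv1$ (so that $\mu$ is the measure representing the trivial harmonic function, i.e.\ the law of $Y_\infty$); this reads $1=\mu(A_{G_m})/P(Y_m=G_m)$ and again forces $\mu(A_G)=P(Y_m=G)$.

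The computations are routine; the care is conceptual. The main point to get right is the equivalence ``Doob-Martin convergence holds if and only if every coordinate eventually freezes'', that is, that convergence of the finitely many kernel values at each level $m$ is the same as eventual constancy of $\Psi_m^n(G_n)$, together with checking that the resulting identification with $\bG_{\text{\rm proj}}$ is a homeomorphism onto the \emph{full} projective limit rather than a proper subset. I would also note in passing that the whole discussion presupposes $\theta\in(0,1)$, since weak irreducibility~\eqref{eq:weakirr}, needed for the Martin kernel to be well defined, fails in the degenerate cases $\theta\in\{0,1\}$.
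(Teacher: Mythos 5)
Your proposal is correct and follows essentially the same route as the paper: compute the Martin kernel, find $K(G_m,G_n)=1/P(Y_m=G_m)$ when $\Psi_m^n(G_n)=G_m$ and $0$ otherwise, deduce that Doob--Martin convergence is the freezing of the restrictions, and identify the limit's coordinates with the chain's own values for part (b). You simply spell out the details that the paper compresses into ``part (a) follows easily,'' and your remark that $\theta\in(0,1)$ is needed for weak irreducibility~\eqref{eq:weakirr} is a sensible observation.
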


\begin{proof}
Again, we look at the Martin kernel: If $G_m$ is on the unique path from $G_1$ to $G_n$ then 
$P(Y_n=G_n,Y_m=G_m)=P(Y_n=G_n)$ so that $K(G_m,G_n)=1/P(Y_m=G_m)$, and  $K(G_m,G_n)=0$ 
otherwise. From this part (a) of the theorem follows easily. For (b) we note that a boundary point $(G_n)_{n\in\bN}$
is in $A_G$ if and only if $G_m=G$. 
\end{proof}

In this situation conditioning on a limit value leads to a deterministic motion along the sequence of graphs
that represents the limit. Part (b) and~\eqref{eq:distY} imply that the limit distribution is diffuse. In 
particular, the tail $\sigma$-field is not trivial, as we have already noted before.

The perfect memory property is a consequence of the labelling of the nodes in the order of their appearance and the fact
that in the step from $n$ to $n+1$ only edges incident to $n+1$ are added.  Uniform attachment graphs do not have this
property; for example, if $\{1,2\}\in E(X_3)$ then it is not clear whether this edge has been added at time $2$ or $3$.

We now show that in the perfect memory case random relabelling may lead to a more interesting topology.  
We recall that the group $\bS[n]$ of permutations of $[n]$ acts on $\bG[n]$, meaning that
each $\pi_n\in\bS[n]$ defines a function on and to $\bG[n]$ by mapping $G=([n],E(G))$ to
\begin{equation}\label{eq:pionG}
  \pi_n(G) \, :=\, \bigl([n], \bigl\{\{\pi_n(i),\pi_n(j)\}: \, \{i,j\}\in E(G)\bigr\}\bigr).
\end{equation}
Now let $\Pi_n$, $n\in\bN$, be a sequence of independent random variables, with $\Pi_n$ uniformly distributed on
$\bS[n]$. We define $X=(X_n)_{n\in\bN}$ inductively by $X_1\equiv G_1$, $X_{n+1}=\Pi_{n+1}(\tilde X_{n+1})$, where $\tilde
X_{n+1}$ is constructed from $X_n$ as in the chain~$Y$ above: $V(\tilde X_{n+1})=[n+1]$ and $E(\tilde X_{n+1})$ is 
obtained from $E(X_n)$ by adding each of the edges $\{i,n+1\}$, $i\in[n]$, independently with probability $p$.   
In view of the fact that the transition from $X_n$ to $X_{n+1}$ only involves $X_n$ and quantities that are independent 
of $X_n$ the process $X=(X_n)_{n\in\bN}$ is again a Markov chain and it continues to be adapted to $\bG$. 
Also, the distribution~\eqref{eq:distY} is invariant under $\bS[n]$ as the 
action~\eqref{eq:pionG} does not change the number of edges. This implies that $X_n$ and the
variable $Y_n$ from the perfect memory version without random relabelling have the same distribution
(but they will in general not be equal). 
Below, we will refer to the sequence $X$ as the Erd{\H o}s-R\'enyi chain with parameter $\theta$.  

The following result shows that the Doob-Martin compactification for this model leads to the sampling topology 
mentioned at the beginning of this section; see~\eqref{eq:tdef} and~\eqref{eq:tdefrho}.

\begin{theorem}\label{thm:ER} Let $X$ be the Erd{\H o}s-R\'enyi chain with parameter $\theta$, $0<\theta<1$.

\emph{(a)} A sequence $(G_n)_{n\in\bN}$ with $G_n\in\bG[n]$ for all $n\in\bN$ converges in the Doob-Martin compactification
associated with $X$ if and only if the sequences $\big(\rho(H,G_n)\bigr)_{n\in\bN}$ 
converge for every fixed $H\in\bG$.

\smallbreak
\emph{(b)} With probability~1, $X_\infty$ is equal to the function
\begin{equation*}
  H\, \mapsto \, \theta^{e(H)} (1-\theta)^{\binom{v(H)}{2}-e(H)}, \quad H\in\bG.
\end{equation*}
\end{theorem}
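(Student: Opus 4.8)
The plan is to compute the Martin kernel and read off both parts from it, the engine being a ``sampling'' description of the chain run backwards. Write $q(G):=\theta^{e(G)}(1-\theta)^{\binom{v(G)}{2}-e(G)}$, so that $\P(X_n=G_n)=q(G_n)$ by~\eqref{eq:distY} and the fact, noted before the theorem, that $X_n$ and $Y_n$ have the same law. By Bayes' formula the Martin kernel may be written as $K(G_m,G_n)=\P(X_m=G_m\mid X_n=G_n)/\P(X_m=G_m)$, so everything reduces to the identity
\begin{equation}\label{eq:samplingclaim}
  \P(X_m=G_m\mid X_n=G_n)\;=\;\rho(G_m,G_n)\qquad(m<n),
\end{equation}
i.e.\ that conditionally on $X_n=G_n$ the graph $X_m$ is distributed like the subgraph induced on a uniformly chosen $m$-subset of $[n]$, relabelled to $[m]$. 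Granting~\eqref{eq:samplingclaim} we obtain $K(G_m,G_n)=\rho(G_m,G_n)/q(G_m)$; since $q(G_m)>0$ is a constant for fixed $G_m$, convergence of $K(G_m,\cdot)$ along a sequence is equivalent to convergence of $\rho(G_m,\cdot)$, and as every $H\in\bG$ arises as some $G_m$ this is exactly the sampling criterion of part~(a).

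First I would prove~\eqref{eq:samplingclaim}. The main point is a one-step statement: conditionally on $X_n=G_n$, the graph $X_{n-1}$ is obtained by deleting a uniformly random vertex of $G_n$ and relabelling. To see this, mark the position $V_n:=\Pi_n(n)$ of the vertex added at step $n$; deleting it from $X_n$ recovers $X_{n-1}$ up to the relabelling induced by $\Pi_n^{-1}$. The graph $\tilde X_n$ before relabelling is $X_{n-1}$ with one extra vertex joined to each old vertex independently with probability $\theta$, hence is itself an Erd{\H o}s-R\'enyi graph on $[n]$ and in particular exchangeable. Therefore, reindexing the permutations by $H=\pi^{-1}(G_n)$,
\[
  \P(X_n=G_n,\,V_n=v)\;=\;\frac1{n!}\!\!\sum_{\pi\in\bS[n],\,\pi(n)=v}\!\!\P\bigl(\tilde X_n=\pi^{-1}(G_n)\bigr)\;=\;\frac{(n-1)!}{n!}\,q(G_n)\;=\;\frac1n\,\P(X_n=G_n),
\]
using that $\pi^{-1}(G_n)$ has the same number of edges as $G_n$. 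Thus $V_n$ is conditionally uniform on $[n]$ given $X_n=G_n$, which is the one-step claim. The general case~\eqref{eq:samplingclaim} then follows by downward induction on $n$: conditioning on $X_{n-1}$ and using the Markov property, so that $X_m$ and $X_n$ are conditionally independent given $X_{n-1}$ for $m\le n-1$, the conditional law of $X_m$ given $X_n=G_n$ is the law obtained by first deleting one uniform vertex and then, by the inductive hypothesis, sampling $m$ of the remaining $n-1$; the consistency of uniform subsampling identifies the composition with sampling $m$ vertices out of $n$, that is, with $\rho(\cdot,G_n)$.

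For part~(b) I would combine this with the general convergence theorem. Since $\bG$ is graded by the vertex number and the chain is weakly irreducible, \eqref{eq:aslimit} holds, and by part~(a) this means $\rho(H,X_n)\to\rho(H,X_\infty)$ almost surely for every fixed $H$. It remains to identify the limit as the constant $q(H)$. Taking $k:=v(H)$ and using~\eqref{eq:samplingclaim}, $\E[\rho(H,X_n)]=\P(X_k=H)=q(H)$ for all $n>k$, so by bounded convergence $\E[\rho(H,X_\infty)]=q(H)$. To upgrade this to an almost sure identity I would show the limit is deterministic by a variance estimate. Writing $\rho(H,X_n)=\bigl((n-k)!/n!\bigr)\sum_\phi 1_\phi$ as an average of the indicators $1_\phi$ that an injection $\phi\colon[k]\to[n]$ is an induced embedding of $H$ into $X_n$, the crucial fact is that, $X_n$ being Erd{\H o}s-R\'enyi with independent edges, two indicators $1_\phi,1_{\phi'}$ with disjoint images are independent. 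Hence only the $O(n^{2k-1})$ pairs with overlapping images contribute to $\var(\rho(H,X_n))$, against a normalisation of order $n^{2k}$, so $\var(\rho(H,X_n))=O(1/n)\to0$. Bounded convergence then gives $\var(\rho(H,X_\infty))=0$, whence $\rho(H,X_\infty)=q(H)$ almost surely, which is part~(b).

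I would expect the one-step claim behind~\eqref{eq:samplingclaim} to be the delicate step: the exchangeability of $\tilde X_n$ is precisely what makes the position of the freshly added vertex conditionally uniform, and keeping the labelled bookkeeping honest, as opposed to the isomorphism-class bookkeeping implicit in $\rho$, is where care is needed. Note in particular that~\eqref{eq:samplingclaim} genuinely requires $m<n$, since at $m=n$ the final relabelling is absorbed into the conditioning and the identity fails.
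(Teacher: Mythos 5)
Your proof is correct, and it arrives at the same pivotal identity as the paper, namely $K(G_m,G_n)=\rho(G_m,G_n)/P(X_m=G_m)$, but from the opposite temporal direction. The paper computes $P(X_n=G_n\mid X_m=G_m)$ \emph{forward} in time: it conditions on the random injection $\Phi:[m]\to[n]$ recording where the successive relabellings send the first $m$ vertices, notes that $P(X_n=G_n\mid X_m=G_m,\Phi=\phi)$ vanishes unless $\phi\in T(G_m,G_n)$ and otherwise equals $\theta^{e(G_n)-e(G_m)}(1-\theta)^{\binom{n}{2}-e(G_n)-\binom{m}{2}+e(G_m)}$ independently of $\phi$, and then sums over the $t(G_m,G_n)$ admissible injections. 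You instead pass through Bayes and establish the reverse-chain statement $P(X_m=G_m\mid X_n=G_n)=\rho(G_m,G_n)$ via a one-step exchangeability lemma (the freshly added vertex is conditionally uniform because $\tilde X_n$ is itself an exchangeable Erd{\H o}s--R\'enyi graph) followed by downward induction and the consistency of uniform subsampling. That is a genuinely different decomposition, and it buys the conceptually pleasant fact that the time-reversed chain is uniform vertex deletion; the paper's forward conditioning is shorter but leaves the independence of the post-relabelling edge variables from $\Phi$ implicit. One small point to make explicit in your write-up: to get the full one-step claim you need not only that $V_n=\Pi_n(n)$ is conditionally uniform but that the induced relabelling of the surviving vertices is too; your own display already shows $P(X_n=G_n,\Pi_n=\pi)=q(G_n)/n!$ for every $\pi$, i.e.\ $\Pi_n$ remains conditionally uniform given $X_n=G_n$, so this is a one-line addition rather than a gap. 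For part (b) the paper offers only the sentence ``this follows from the sampling interpretation of $\rho$ and the independence of the edge indicators''; your computation --- mean $q(H)$ from the sampling identity and variance $O(1/n)$ because only pairs of injections with overlapping images are correlated, followed by bounded convergence --- is precisely the argument that sentence is gesturing at, written out in full.
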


\begin{proof}
(a) Let $G_m\in\bG[m]$, $G_n\in\bG[n]$ with $n> m$ be given.  The random relabellings in steps $m+1,\ldots,n$
move the nodes $1,\ldots,m$ of $G_m$ to different positions in $[n]$. This defines a random injective function
$\Phi:[m]\to [n]$, where all $n!/(n-m)!$ possible values $\phi$ of $\Phi$ are equally likely. For 
$\phi\notin T(G_m,G_n)$, with $T$ as in~\eqref{eq:tdef},  we clearly have $P(X_n=G_n|X_m=G_m,\Phi=\phi)=0$, 
whereas for $\phi\in T(G_m,G_n)$,
\begin{equation}\label{eq:inv}
  P(X_n=G_n|X_m=G_m,\Phi=\phi)\, =\, \theta^{e(G_n)-e(G_m)}(1-\theta)^{\binom{n}{2}-e(G_n)-\binom{m}{2}+e(G_m)}.
\end{equation}
Note that the right hand side does not depend on $\phi$.
Using~\eqref{eq:distY} a decomposition with respect to the value of $\Phi$ now gives 
\begin{align*}
  K(G_m,G_n)\ &=\ \frac{\sum_{\phi\in T(G_m,G_n)} P(X_n=G_n|X_m=G_m,\Phi=\phi) \, P(\Phi=\phi|X_m=G_m)}
                                       {P(X_n=G_n)}\\
                       &=\ \frac{(n-m)!\; t(G_m,G_n)}{n! \, \theta^{e(G_m)} (1-\theta)^{\binom{m}{2}-e(G_m)}}
                        \ = \ \frac{\rho(G_m,G_n)}{P(X_m=G_m)}. 
\end{align*}
Clearly, this converges for fixed $G_m$ as $n\to\infty\,$ if and only if $\rho(G_m,G_n)$ does.

\smallbreak
(b) This follows from the sampling interpretation of $\rho$ and the independence of the edge
indicators.
\end{proof}

Again, part (b) implies that the tail $\sigma$-field of $X$ is trivial.
Further, the Erd{\H o}s-R\'enyi chains with different parameter values are easily seen to be $h$-transforms
of each other. 

Theorem~\ref{thm:ER} identifies the Doob-Martin boundary of the Erd{\H o}s-R\'enyi chain as a subset of the
set of all functions $G_\infty:\bG\to [0,1]$. For a concise description of this subset, by `graphons', we refer 
the reader to~\cite{LovaszBook}.

\section{Search trees}%
\label{sec:BST}

The nodes of the complete binary tree $\bV=\{0,1\}^\star$ are finite 0-1 sequences (or words) $u=(u_1,\ldots,u_k)$; we
write $u0=(u_1,\ldots,u_k,0)$ and $u1=(u_1,\ldots,u_k,1)$ for the left and right child of $u$ respectively and, if
$k=|u|>0$, $\bar u=(u_1,\ldots,u_{k-1})$ for its direct ancestor. The concatenation of $u=(u_1,\ldots,u_k)$ and
$v=(v_1,\ldots,v_l)$ is given by $u+v=(u_1,\ldots,u_k,v_1,\ldots,v_l)$. We further write $\partial\bV=\{0,1\}^\infty$ for the
ends compactification of the tree $\bV$.

By a binary tree we mean a subset
$x\subset \bV$ that is prefix-stable or, equivalently, contains the ancestor of each of its non-root elements. If
$u\notin x$, $\bar u\in x$, then we call $u$ external, and we write $\partial x$ for the set of external nodes of $x$.
The (fringe) subtree of $x$ rooted at $u$ is given by $x(u):=\{v\in\bV:\, u+v\in x\}$. Let $\bB_n$ be the set of binary trees with $n$
nodes; $\bB:=\bigcup_{n=1}^\infty \bB_n$.  Prefix stability implies that any $x\in\bB$ can be regarded as a contiguous
subset of $\bV$ and hence be described by its \emph{boundary function}
\begin{equation*}
    B_x:\partial\bV\to \bN, \quad (u_i)_{i\in\bN}\mapsto \min\bigl\{k\in\bN:\, (u_1,\ldots,u_k)\notin x\bigr\}.      
\end{equation*}
(This seems to be the most natural term, but in view of all the other occurrences of boundaries in the present paper, 
`frontier' may be a sensible alternative.)

Given a sequence $(t_n)_{n\in\bN}$ of pairwise distinct real numbers the binary search tree (BST) algorithm generates a
sequence $(x_n)_{n\in\bN}$ of labelled binary trees as follows: The first value is stored at the root node; given $x_n$ the next value
$t_{n+1}$ is stored at the first empty node found when travelling through $x_n$, moving from $u$ to $u0$ if the new value is
smaller that the label of an occupied node and to $u1$ otherwise. This is one of the standard algorithm for searching and
also arises in the context of sorting; see~\cite{Knuth3}, \cite{Mahmoud1} and~\cite{Drmota09}. Suppose that
the $t_i$'s are realizations of independent random variables $\eta_i$, $i\in\bN$, with the same continuous
distribution. Then the random binary trees $X_n$ obtained for $\eta_1,\ldots,\eta_n$, $n\in\bN$, can be collected into a
Markov chain $X=(X_n)_{n\in\bN}$ with a simple transition structure: $X_1$ is the tree that consists of the root node $\emptyset$
only, and in the transition from $X_n$ to $X_{n+1}$ one of the $n+1$ external nodes of $X_n$ is chosen uniformly at
random and incorporated into the tree. The BST chain has $\bB$ as its state space, and $P(X_n\in \bB_n)=1$ for all
$n\in\bN$. 

The Doob-Martin compactification of $\bB$ with respect to $X$ and the distribution of the limit $X_\infty$ were obtained
by~\cite{EGW1} and can be described as follows: $\partial\bB$ is the set of probability measures $\mu$ on $\partial\bV$.
Convergence to $\mu\in\partial\bB$ of a sequence $(x_n)_{n\in\bN}\subset\bB$ means that $a_n:=\#x_n\to\infty$ and that
the relative number $\#x_n (u)/a_n$ of nodes in the subtree rooted at $u\in\bV$ converges to $\mu(A_u)$ for all
$u\in\bV$, where $A_u$ consists of all infinite 0-1 sequences with prefix $u$. As with the transition tree in the
previous section, the values $\mu(A_u)$, $u\in\bV$, determine $\mu$.  

We have $P(X_\infty(A_u)>0)=1$ for all $u\in\bV$.  The distribution of $X_\infty$ is a probability
measure on $\partial\bB$, hence on the set of probability measures on $\partial\bV$, where the latter is endowed with the
$\sigma$-field generated by the projections $\mu\mapsto \mu(A)$, $A$ a Borel subset of $\partial \bV$.
This distribution has the (characterizing) property that the random variables
\begin{equation}\label{eq:xidef}
  \xi_u := \frac{X_\infty(A_{u0})}{X_\infty(A_{u})}, \quad u\in\bV,
\end{equation}
are independent and uniformly distributed on the unit interval. This in turn implies that $X_\infty(A_u)$ can be written
as the product of independent, identically distributed random variables, a fact that we will use repeatedly below. 

In the present section we apply this to the asymptotics of the random functions $B_{X_n}$, $n\in\bN$. The idea
of describing randomly growing sets by their boundary appears in connection with models now known under
the acronym `IDLA' (internal diffusion limited aggregation). This subject area was initiated by~\cite{DiacFul},
an early important contribution is~\cite{Lawletal}. Both papers deal with integer lattices, but the basic
model has since then been applied to various other infinite discrete background sets, for example to the `comb' 
by~\cite{Huss}. BST chains may be seen as an IDLA variant on the background set $\bV$, where the exploration 
process is a reinforced random walk in the sense that the probabilities of moving from $u$ to $u0$ and $u1$
respectively depend on the number of previous particles that have travelled along the respective edge. 
The BST boundary functions have earlier been investigated under the name of `silhouette' in~\cite{GruePoiss,GrSilh}, 
where they were regarded as functions on the unit interval via 
\begin{equation}\label{eq:defbeta}
  \partial\bV \ni u=(u_k)_{k\in\bN} \; \mapsto \; \beta(u) := \frac{1}{2} \; + \; \sum_{k=1}^\infty
  \frac{2u_k-1}{2^{k+1}}\in [0,1]
\end{equation}
(binary rationals do not matter as $X_\infty$ has no atoms). Figure~\ref{fig:rawSilh} shows the boundary functions 
of $X_n$ for various $n$, with pseudorandom data, where~\eqref{eq:defbeta} has been used to display $B_{X_n}$ as 
a function on $[0,1]$.

\begin{figure}
  \begin{center}
      \includegraphics[width=9cm,height=7cm]{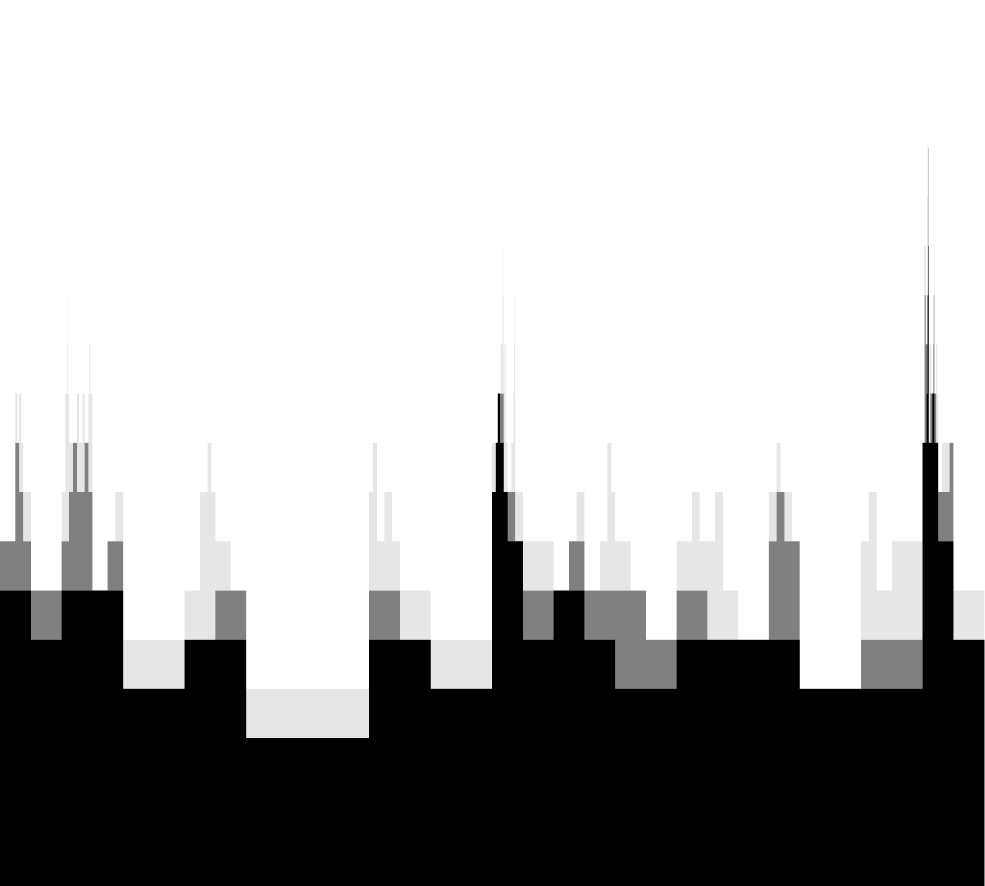}
 \end{center}
\caption{The subgraph of $B_{X_n}$, for $n=50$ (black), $n=100$ (gray) and $n=200$ (light gray).} \label{fig:rawSilh}
\end{figure}

We begin with two real-valued functionals of the boundary functions. First we consider the growth of
the trees along a fixed path through the infinite binary tree. 

\begin{theorem}\label{thm:Bfunc1}
  Let $u\in\partial \bV$ be fixed. Then the tail $\sigma$-field of the sequence $(B_{X_n}(u))_{n\in\bN}$ is $P$-trivial.
\end{theorem}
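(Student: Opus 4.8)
The plan is to reduce the statement to a clean fact about sums of independent indicators. Write $B_n := B_{X_n}(u)$ and let $u^{(k)} := (u_1,\dots,u_k)$ denote the prefixes of $u$. Since the tree only grows, $B_n$ is nondecreasing, and since exactly one external node is incorporated at each step, $B_{n+1}-B_n\in\{0,1\}$; moreover $B_{n+1}-B_n=1$ precisely when the node chosen at step $n$ is the unique external node $u^{(B_n)}$ sitting at the tip of the path $u$ in $X_n$. The first thing I would record is that this tip is always exactly one of the $n+1$ external nodes of $X_n$, each selected with probability $1/(n+1)$ independently of $X_1,\dots,X_n$. Hence, writing $I_n:=B_{n+1}-B_n$, the conditional probability $\P(I_n=1\mid X_1,\dots,X_n)=1/(n+1)$ is deterministic, so $I_n$ is independent of the past; inductively the $I_n$ are independent with $I_n\sim\mathrm{Bernoulli}(1/(n+1))$, and $B_n=1+\sum_{m=1}^{n-1}I_m$. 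This structural observation is the heart of the matter.

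It then remains to show that the tail field $\cT:=\bigcap_{n}\sigma(B_m:m\ge n)$ of this partial-sum sequence is trivial. Note $\cT\subseteq\sigma(B_1,B_2,\dots)\subseteq\sigma(I_1,I_2,\dots)$, so every $A\in\cT$ is measurable with respect to the independent family $(I_m)$. I would prove triviality by showing that each such $A$ is independent of $\sigma(I_1,\dots,I_k)$ for every fixed $k$; since these algebras increase to $\sigma(I_1,I_2,\dots)\ni 1_A$, a Dynkin argument then makes $A$ independent of itself, forcing $\P(A)\in\{0,1\}$.

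To establish this conditional independence, fix $k$ and $n>k$ and use $A\in\sigma(B_m:m\ge n)=\sigma(B_n,I_n,I_{n+1},\dots)$ to write $1_A=g_n(B_n,I_n,I_{n+1},\dots)$. Conditioning on $(I_1,\dots,I_k)$ enters only through the integer $c:=1+\sum_{m\le k}I_m$, because $B_n=c+R_n$ with $R_n:=\sum_{m=k+1}^{n-1}I_m$, and $(R_n,I_n,I_{n+1},\dots)$ is independent of $(I_1,\dots,I_k)$. Integrating out $I_n,I_{n+1},\dots$ leaves a $[0,1]$-valued function $\psi_n$ of the single integer argument, so that $\P(A\mid (I_1,\dots,I_k))=\E[\psi_n(c+R_n)]$; comparing two outcomes of $(I_1,\dots,I_k)$ whose shifts differ by $d$ gives
\[
  \bigl|\P(A\mid\text{first outcome})-\P(A\mid\text{second outcome})\bigr|\ \le\ \bigl\|\mathrm{Law}(R_n+d)-\mathrm{Law}(R_n)\bigr\|_{\mathrm{TV}}.
\]
Since the left-hand side does not depend on $n$, it suffices to send $n\to\infty$ on the right.

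The main obstacle is exactly this translation-smoothing estimate: I must show $\|\mathrm{Law}(R_n+d)-\mathrm{Law}(R_n)\|_{\mathrm{TV}}\to 0$ for each fixed integer $d$. This holds because $R_n$ is a sum of independent indicators with $\var(R_n)=\sum_{m=k+1}^{n-1}\frac{1}{m+1}\bigl(1-\frac{1}{m+1}\bigr)\sim\log n\to\infty$; the attendant local limit theorem (the Lindeberg condition is automatic for bounded increments with variance tending to infinity) yields $\sum_{j}\bigl|\P(R_n=j)-\P(R_n=j+d)\bigr|\to 0$. With this estimate the conditional probabilities of $A$ coincide across all outcomes of $(I_1,\dots,I_k)$, so each equals $\P(A)$; this is precisely the independence of $A$ from $\sigma(I_1,\dots,I_k)$, and letting $k\to\infty$ completes the proof. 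The only genuinely analytic input is the local limit theorem for $R_n$; everything else is the combinatorial identification of the increments and routine bookkeeping.
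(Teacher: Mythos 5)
Your proof is correct, but it takes a genuinely different route from the paper. The paper reduces $(B_{X_n}(u))_{n}$ (in distribution) to the records chain $((n,S_n))_n$ and then invokes the known Doob--Martin boundary of that chain: convergence of states $((n,k_n))_n$ means $k_n/\log n\to\alpha\in[0,\infty]$, the almost sure limit is the constant $1$, and a deterministic Doob--Martin limit forces tail triviality by the general principle from Section~\ref{sec:summ}. You instead observe the stronger structural fact that the increments $I_n=B_{n+1}-B_n$ are \emph{genuinely} independent Bernoulli$(1/(n+1))$ variables (not merely that the process has the right law), and then prove tail triviality of the partial-sum sequence directly via the translation-smoothing estimate $\|\mathrm{Law}(R_n+d)-\mathrm{Law}(R_n)\|_{\mathrm{TV}}\to 0$ --- the classical argument that a random walk with spreading-out increments has trivial tail field, correctly adapted to the inhomogeneous setting. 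This is entirely self-contained and uses no boundary theory, whereas the paper's argument is a one-line reduction once the records boundary (path counting, Stirling numbers, the Gnedin--Pitman monotonicity argument) is taken as known. You are also right that Kolmogorov's $0$--$1$ law does not apply directly to the partial sums and that the independence-from-$\sigma(I_1,\dots,I_k)$ scheme is needed. The only slightly glossed step is the smoothing estimate itself: citing ``the local limit theorem via Lindeberg'' is not quite the cleanest justification for an $\ell^1$ bound; the tidy argument is that $R_n$ has a Poisson--binomial, hence log-concave and unimodal, distribution, so $\sum_j|\P(R_n=j)-\P(R_n=j+1)|\le 2\max_j\P(R_n=j)=O(1/\sqrt{\log n})$, which gives the total-variation convergence for every fixed shift $d$. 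With that detail supplied, the proof is complete.
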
 

A proof can easily be obtained on using the well-known connection to records: The BST dynamics imply that 
$(B_{X_n}(u))_{n\in\bN}$ is identical in distribution to the sequence $(S_n)_{n\in\bN}$, $S_n=\sum_{k=1}^n\zeta_k$,
of partial sums of independent random variables $\zeta_k$ with $P(\zeta_k=1)=1-P(\zeta_k=0)=1/k$, $k\in\bN$,
which also appears when counting records in random samples. It follows that $((n,S_n))_{n\in\bN}$ 
is a Markov chain with state space $\bF=\{(n,i):\, n\in\bN, \,i\in [n]\}$ and  transition probabilities
\begin{equation*}
  p\bigl((n,k),(n+1,k+1)\bigr)\; =\; 1 - p\bigl((n,k),(n+1,k)\bigr) \; = \; \frac{1}{n+1}.
\end{equation*}
The structural similarity to the P\'olya urn mentioned in Section~\ref{sec:intro} should be apparent.
In the records chain, a sequence $((n,k_n))_{n\in\bN}$ of states converges in the sense introduced in 
Section~\ref{sec:summ} if and only if
\begin{equation*}
  \lim_{n\to\infty} \frac{k_n}{\log n} = \alpha\in [0,\infty],
\end{equation*}
which leads to $\bar\bF=\bF\cup [0,\infty]$. This can be proved by `path-counting', the asymptotics of unsigned
Stirling numbers of the first kind, and an interesting monotonicity argument; see~\cite{GnPit} and the references given
there. In the compactification, $S_n$ tends to the constant value~1, which implies triviality of the tail $\sigma$-field
as explained at the end of Section~\ref{sec:summ}.

Let 
\begin{equation*}
 H(0)=0,\quad H(n)=\sum_{k=1}^n1/k\ \text{ for }n\in\bN, 
\end{equation*}
be the harmonic numbers.  From the representation of $B_{X_n}(u)$ as a sum 
of independent Bernoulli variables we obtain the expected value $\,{\E}B_{X_n}(u) = H(n)$
and, using $H(n)\sim\log n$, the distributional
convergence 
\begin{equation}\label{eq:asnorm}
  \frac{1}{\sqrt{\log n}}\bigl(B_{X_n}(u)- \log n\bigr) \ \todistr\; Z\ \text{ as } n\to\infty,
\end{equation}
where $Z$ has a standard normal distribution. However, by Theorem~\ref{thm:Bfunc1}, there is no transformation 
of the random variables $B_{X_n}(u)$, $u\in\partial\bV$ fixed, that leads to strong convergence with a non-degenerate limit.

For the second functional we integrate the boundary functions with respect to the measure
$\lambda$ on $\partial\bV$ given by $\lambda(A_u)=2^{-|u|}$. This is the unique normalized Haar measure if we 
regard the set of infinite 0-1 sequences as a compact group under the pointwise addition modulo 2. 
Recall from~\eqref{eq:xidef} that $\xi_u=X_\infty(A_{u0})/X_\infty(A_u)$, and let 
\begin{equation*}
   C(t)\, =\, 1 \,  +\, \frac{1}{2}\bigl(\log(t)+\log(1-t)\bigr),\quad 0<t<1.
\end{equation*}

\begin{lemma}\label{lem:l0}
The random variables $L_{\infty,k}:= \sum_{|u|<k} 2^{-|u|}C(\xi_u)$, $k\in\bN$, converge
almost surely and in $L^2$ as $k\to\infty$.   
\end{lemma}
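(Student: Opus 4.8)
The plan is to exploit the independence structure of $X_\infty$ recorded just before the lemma: since the $\xi_u$, $u\in\bV$, are independent and $\unif(0,1)$-distributed, the successive level-increments of $(L_{\infty,k})_{k\in\bN}$ are sums of independent random variables. Concretely $L_{\infty,k+1}-L_{\infty,k}=2^{-k}\sum_{|u|=k}C(\xi_u)$, and this increment is independent of $\cF_k:=\sigma(\xi_u:\,|u|<k)$. The decisive first step is therefore to check that each summand is \emph{centered}. Using $\int_0^1\log t\,dt=-1$ one gets $\E\log\xi_u=\E\log(1-\xi_u)=-1$, so that
\begin{equation*}
  \E C(\xi_u)\;=\;1+\tfrac12\bigl(\E\log\xi_u+\E\log(1-\xi_u)\bigr)\;=\;1-1\;=\;0.
\end{equation*}
(Indeed, the constant $1$ in the definition of $C$ is chosen precisely so that this holds.) Consequently $(L_{\infty,k})_{k\in\bN}$ is a mean-zero martingale with respect to $(\cF_k)_{k\in\bN}$.

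Second, I would establish a uniform $L^2$-bound. The second logarithmic moment of a uniform variable is finite, $\int_0^1(\log t)^2\,dt=2$, so $\sigma^2:=\var C(\xi_u)<\infty$, a value independent of $u$. Because the $\xi_u$ are independent, the variance of $L_{\infty,k}$ is the sum of the variances of its terms; grouping by level $j$ — which carries $2^j$ nodes, each contributing $4^{-j}\sigma^2$ — gives
\begin{equation*}
   \var L_{\infty,k}\;=\;\sigma^2\sum_{j=0}^{k-1}2^j\,4^{-j}\;=\;\sigma^2\sum_{j=0}^{k-1}2^{-j}\;\le\;2\sigma^2 .
\end{equation*}
Thus the martingale is bounded in $L^2$. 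The geometric decay $2^{-j}$ of the per-level variance is what makes the argument run: it is forced by the weight $2^{-|u|}$ pitted against the $2^j$-fold multiplicity of level $j$.

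Finally, a mean-zero martingale bounded in $L^2$ converges almost surely and in $L^2$ by the $L^2$ martingale convergence theorem, which is exactly the asserted conclusion. Equivalently, the same bound shows directly that $(L_{\infty,k})_{k\in\bN}$ is Cauchy in $L^2$, while Kolmogorov's one-series theorem — applicable since the terms $2^{-|u|}C(\xi_u)$ are independent with summable variances — yields the almost sure convergence. I do not expect any serious obstacle: the only points needing care are the two elementary integral evaluations showing that $C(\xi_u)$ is centered with finite variance, and the bookkeeping of node multiplicities per level; everything else is the standard $L^2$ theory for sums of independent centered random variables.
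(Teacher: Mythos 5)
Your proof is correct and follows essentially the same route as the paper: verify that $\E C(\xi_u)=0$ and $\var C(\xi_u)<\infty$, use independence to get the per-level variance bound $2^{-k}\sigma^2$, and conclude by the $L^2$-bounded martingale convergence theorem (the paper even computes $\sigma^2=1-\pi^2/12$ explicitly, which you do not need). The added remark that Kolmogorov's one-series theorem gives the almost sure convergence directly is a valid, equally standard alternative and changes nothing of substance.
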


\begin{proof} A straightforward calculation shows that ${\E}C(\xi_u) = 0$ and $\var(C(\xi_u)) = 1-(\pi^2/12)$.
In particular, using independence of the $\xi_u$'s,
\begin{equation*}
  \var\Bigl(\,\sum_{|u|=k}2^{-|u|} C(\xi_u)\Bigr)\, = \, 2^{-k}\Bigl(1-\frac{\pi^2}{12}\Bigr),
\end{equation*}
which implies that $\ \bigl(\sum_{|u|\le k}2^{-|u|} C(\xi_u), \sigma(\{\xi_u:\, |u|\le k\})\bigr)_{k\in\bN}$
is an $L^2$-bounded martingale, so that the corresponding limit theorem can be used. 
\end{proof}

We write $L_\infty=\sum_{v\in\bV} 2^{-|v|}C(\xi_v)$ for the limit of $L_{\infty,k}$ as $k \to\infty$. In this series we do
not have absolute convergence: For $k\in\bN$ fixed the mean of the random variable $\sum_{|u|=k}2^{-|u|} |C(\xi_u)|$ is a positive
value that does not depend on $k$.

\begin{theorem}\label{thm:Bfunc2} 
Let $\,L_n:=\int B_{X_n}\, d\lambda$. Then $\;  L_n-H(n) \, \to\, L_\infty\,$ with probability~1 as $n\to\infty$.
\end{theorem}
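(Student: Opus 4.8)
The plan is to recognize $L_n$ as a simple weighted node count, to show that $L_n-H(n)$ is an $L^2$-bounded martingale, and finally to identify its almost sure limit with $L_\infty$ through the self-similar recursion of the binary search tree.

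First I would rewrite the functional. Since $x$ is prefix-stable, $B_x(u)$ equals the number of prefixes of $u$ lying in $x$, so $B_x(u)=\sum_{k\ge 0}1[(u_1,\dots,u_k)\in x]$; integrating against $\lambda$ and using $\lambda(A_w)=2^{-|w|}$ yields the clean identity $L_n=\int B_{X_n}\,d\lambda=\sum_{w\in X_n}2^{-|w|}$. In the step from $X_n$ to $X_{n+1}$ a single external node of depth $D_{n+1}$ is added, so $L_{n+1}-L_n=2^{-D_{n+1}}$. The chosen external node is uniform among the $n+1$ external nodes of $X_n$, and because the sets $A_u$, $u\in\partial X_n$, partition $\partial\bV$ we have $\sum_{u\in\partial X_n}2^{-|u|}=\lambda(\partial\bV)=1$; hence $\E[2^{-D_{n+1}}\mid X_n]=1/(n+1)$. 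As $H(n+1)-H(n)=1/(n+1)$, this shows that $M_n:=L_n-H(n)$ is a martingale for the filtration generated by $X$, with $M_1=0$.

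Next I would prove $L^2$-boundedness, which I expect to be the main obstacle, since the naive bound on the increment variance is not summable and a genuine cancellation must be extracted. Writing $M_{n+1}-M_n=2^{-D_{n+1}}-1/(n+1)$, the conditional variance equals $\frac{1}{n+1}\sum_{u\in\partial X_n}4^{-|u|}-\frac{1}{(n+1)^2}$. The quantity $\sum_{u\in\partial X_n}4^{-|u|}$ is controlled by the elementary generating-function identity $\sum_{u\in\partial x}z^{|u|}=(2z-1)\sum_{u\in x}z^{|u|}+1$ (each non-root and each external node is the child of a unique internal node), which at $z=1/4$ reads $\sum_{u\in\partial X_n}4^{-|u|}=1-\tfrac12 R_n$ with $R_n:=\sum_{u\in X_n}4^{-|u|}$. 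The insertion dynamics give $\E[R_{n+1}\mid X_n]=R_n\bigl(1-\tfrac{1}{2(n+1)}\bigr)+\tfrac{1}{n+1}$, whose solution satisfies $\E[R_n]=2-\prod_{j=2}^n\bigl(1-\tfrac{1}{2j}\bigr)=2-O(n^{-1/2})$. Thus $\E\bigl[\sum_{u\in\partial X_n}4^{-|u|}\bigr]=O(n^{-1/2})$ and the expected increment variances are $O(n^{-3/2})$, hence summable. By orthogonality of martingale increments $\sup_n\E[M_n^2]<\infty$, so $M_n\to M_\infty$ almost surely and in $L^2$, for some $M_\infty$ with $\E[M_\infty]=0$ and $\E[M_\infty^2]=:\sigma^2<\infty$.

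Finally I would identify $M_\infty=L_\infty$ via self-similarity. Splitting the node sum at the root gives $L_n=1+\tfrac12\sum_{w\in X_n(0)}2^{-|w|}+\tfrac12\sum_{w\in X_n(1)}2^{-|w|}$, and each fringe subtree $X_n(i)$ is itself a binary search tree chain observed in its own internal time $N_i:=\#X_n(i)$, with $N_0/n\to\xi_\emptyset$, $N_1/n\to 1-\xi_\emptyset$ and $N_i\to\infty$ almost surely. Subtracting $H(n)$ and using $H(m)=\log m+\gamma+o(1)$, the deterministic correction $1+\tfrac12 H(N_0)+\tfrac12 H(N_1)-H(n)$ converges to $1+\tfrac12\log\xi_\emptyset+\tfrac12\log(1-\xi_\emptyset)=C(\xi_\emptyset)$, while the two subtree martingales converge to copies $M^{(0)}_\infty,M^{(1)}_\infty$ of the limit built from the shifted split ratios $(\xi_{0+v})_v$ and $(\xi_{1+v})_v$. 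This yields the recursion $M_\infty=C(\xi_\emptyset)+\tfrac12 M^{(0)}_\infty+\tfrac12 M^{(1)}_\infty$. Iterating it $k$ times gives $M_\infty=L_{\infty,k}+\sum_{|v|=k}2^{-|v|}M^{(v)}_\infty$, where the $M^{(v)}_\infty$, $|v|=k$, are independent, mean-zero and of common variance $\sigma^2$, so the remainder has squared $L^2$-norm $2^{-k}\sigma^2\to 0$. Letting $k\to\infty$ and invoking Lemma~\ref{lem:l0} ($L_{\infty,k}\to L_\infty$) identifies $M_\infty=L_\infty$ almost surely, which is the assertion.
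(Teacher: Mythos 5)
Your proof is correct, but it takes a genuinely different route from the paper's. The paper works backwards from the limit: it computes $\E[L_\infty|\cF_n]$ term by term, using the fact (from Gr\"ubel--Mitschunas, cited as Proposition~2 and Lemma~4 of the $m$-ary tree paper) that $\xi_u$ given $X_n$ is Beta-distributed with $\E\log\zeta=H(i)-H(i+j+1)$, and a summation by parts to show that this conditional expectation is exactly $\sum_{u\in X_n}2^{-|u|}-H(n)=L_n-H(n)$; the theorem is then immediate from convergence of the closed ($L^2$-bounded) martingale $\E[L_\infty|\cF_n]\to\E[L_\infty|\cF_\infty]=L_\infty$, with the martingale property, the $L^2$-bound and the identification of the limit all coming for free from Lemma~\ref{lem:l0}. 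You instead go forwards: you verify the martingale property of $L_n-H(n)$ directly from the insertion dynamics (the paper explicitly notes this is the route of the earlier `silhouette' paper), supply the $L^2$-bound by a separate second-moment computation via the recursion for $R_n=\sum_{u\in X_n}4^{-|u|}$ and the generating-function identity for external nodes, and then identify the limit with $L_\infty$ through the self-similar fringe decomposition and Lemma~\ref{lem:l0}. Your version is more elementary (it avoids the posterior Beta computation entirely) and the $R_n$ recursion is a nice self-contained way to get summable increment variances; the cost is that the limit identification becomes a separate fixed-point argument in which you should be slightly more careful about two points you gloss over: that $\#X_n(v)\to\infty$ a.s. for every $v$ (which the paper's $P(X_\infty(A_u)>0)=1$ supplies), and that the fringe limits $M^{(v)}_\infty$, $|v|=k$, are independent with mean zero and common variance $\sigma^2$ (which follows because, given the size processes, the internal evolutions of disjoint fringe subtrees are independent BST chains). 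The paper's projection method is shorter once the Doob--Martin description of $X_\infty$ is available, and it is precisely the methodological point the author advertises in the paragraph following the theorem.
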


\begin{proof} Let $\cF_n$ be the $\sigma$-field generated by the first $n$ variables $X_1,\ldots,X_n$ of the BST chain. 
We first show that
\begin{equation}\label{eq:Zrewrite}
  Z_n :=\E[L_\infty|\cF_n] \, = \, \sum_{ u\in X_n} 2^{-|u|} \; -\; H(n).
\end{equation}
By~\eqref{eq:xidef}, the family $\{\xi_u:\, u\in\bV\}$ is a function of the limit $X_\infty$ of the BST sequence.
Hence, using the Markov property of the latter, $\E[C(\xi_u)|\cF_n] = \E[C(\xi_u)|X_n]$.  
From~Proposition~2 and Lemma~4 in~\cite{GrMtree} it is known that the distribution of $\xi_u$ given 
$\# X_n(u0)=i$ and $\# X_n(u1)=j$ is the beta distribution with parameters
$i+1$ and $j+1$, and that for a random variable $\zeta$ with this distribution we have  $\E\log(\zeta) =
H(i)-H(i+j+1)$. In particular, $\E[C(\xi_u)|X_n] = 0$ on $\{u\notin X_n\}$. For $u\in X_n$, using
$\#X_n(u)=\#X_n(u0)+\#X_n(u1)+1$ we obtain
\begin{equation*}
  \E[C(\xi_u)|X_n]\; =\;  \; 1+\frac{1}{2}\bigl(H(\#X_n(u0)) +H(\# X_n(u1))\bigr) -H(\#X_n(u)),
\end{equation*}
which may be written as 
\begin{equation*}
  2^{-|u|} \E[C(\xi_u)|\cF_n]\; = \; 2^{-|u|} -\bigl(\psi_n(u)-\psi_n(u0)-\psi_n(u1)\bigr)
\end{equation*}
with $ \psi_n(u) := 2^{-|u|} H\bigl(\# X_n(u)\bigr)$.
A summation by parts, see~\cite[Lemma~5]{GrMtree}, now gives
\begin{equation*}
  \sum_{ u\in X_n} \bigl(\psi_n(u)-\psi_n(u0)-\psi_n(u1)\bigr)
    \; = \; \psi_n(\emptyset)  -  \sum_{u\in \partial X_n} \psi_n(u).
\end{equation*}
Obviously, $\psi_n(u)=0$ for $u\in\partial X_n$, which completes the proof of~\eqref{eq:Zrewrite}. 

The integral defining $L_n$ may be rewritten as follows,
\begin{equation}\label{eq:intrewrite}
   \int B_{X_n}\, d\lambda\  =\  \sum_{k=1}^\infty k\,2^{-k}\#\{u\in\partial X_n:\, |u|=k\}\;  
                        =\; \sum_{u\in\partial X_n} |u| \, 2^{-|u|}\; =\; \sum_{u\in X_n} 2^{-|u|},
\end{equation}
where the last equality can easily be proved by induction.  Combining this with~\eqref{eq:Zrewrite} and the convergence
of $L^2$-bounded martingales we obtain the assertion.
\end{proof}

As a sum of independent and non-degenerate random variables the limit $L_\infty$ is not almost surely
constant; in particular, the tail $\sigma$-field of the $L$-sequence is not $P$-trivial.

A strong limit theorem for $L_n-H(n)$ has already been obtained in~\citep{GrSilh} by proving directly that
$(L_n-H(n),\cF_n)_{n\in\bN}$ is an $L^2$-bounded martingale. Our approach here differs insofar as it replaces the search
for a suitable martingale by projecting the limit on the natural filtration $(\cF_n)_{n\in\bN}$ of the Markov chain, and
it also provides a representation of the limit in terms of $X_\infty$. The representation in turn leads to an
interpretation of the limit as a distance from $X_\infty$ to the Haar measure $\lambda$: Let $\cG_k$ be the $\sigma$-field
on $\partial\bV$ generated by the sets $A_u$ with $|u|=k$. The Kullback-Leibler divergence $\text{\rm KL}(\mu_1,\mu_2)$
of two measures $\mu_1$ and $\mu_2$ on a measure space $(\Omega,\cF)$, with $\cF$ generated by a partition
$A_1,\ldots,A_l$ of $\Omega$, is given by
\begin{equation*}
  \text{\rm KL}(\mu_1,\mu_2) := \sum_{j=1}^l \mu_1(A_j) \log\biggl(\frac{\mu_2(A_j)}{\mu_1(A_j)}\biggr).
\end{equation*}
 We write $\mu|_{\cG}$ for the restriction of the measure $\mu$ to a sub-$\sigma$-field $\cG$
of its domain~$\cF$. 

\begin{theorem} \label{thm:KL}
With probability~1, $\  L_\infty = \lim_{k\to\infty} \text{\rm KL}(X_\infty|_{\cG_k},\lambda|_{\cG_k})$.
\end{theorem}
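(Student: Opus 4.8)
The plan is to evaluate the Kullback-Leibler expression directly from the product structure of $X_\infty$ and then to recognise the result as a partial sum of the series defining $L_\infty$, whose convergence is already supplied by Lemma~\ref{lem:l0}. First I would use that, for a word $u$ with $|u|=k$, the mass $X_\infty(A_u)$ factorises along the path from the root to $u$: writing $v^{(0)},\dots,v^{(k-1)}$ for the proper prefixes of $u$, each step down contributes a factor $\xi_{v^{(j)}}$ or $1-\xi_{v^{(j)}}$ according to whether $u$ continues to the left or to the right at $v^{(j)}$. Substituting this factorisation into $\sum_{|u|=k}X_\infty(A_u)\log\bigl(\lambda(A_u)/X_\infty(A_u)\bigr)$ and using the Haar values $\lambda(A_u)=2^{-k}$ turns the level-$k$ sum into a weighted sum of logarithms of the split variables $\xi_v$.

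The key algebraic step is to reorganise this double sum, over cylinders $u$ and over the levels $j<k$, into a single sum over the \emph{internal} nodes $v$ with $|v|<k$. Grouping the level-$k$ cylinders according to their length-$(j+1)$ prefix and collapsing descendants via $\sum_{A_u\subset A_w}X_\infty(A_u)=X_\infty(A_w)$ reduces the contribution of a node $v$ to something governed only by $\xi_v$ together with the mass/weight reaching $v$. This is the tree version of the entropy chain rule, and it is essentially the summation-by-parts manoeuvre already exploited in the proof of Theorem~\ref{thm:Bfunc2}. After this reorganisation I expect the level-$k$ divergence to reduce, up to explicit deterministic correction terms arising from $\E\log\xi_v=-1$ and from the weights $2^{-|v|}$ that are built into the definition of $C$, to the partial sum $L_{\infty,k}=\sum_{|v|<k}2^{-|v|}C(\xi_v)$.

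Once such an identity is in place, the conclusion follows by letting $k\to\infty$: Lemma~\ref{lem:l0} gives $L_{\infty,k}\to L_\infty$ almost surely (and in $L^2$), and one verifies that the accompanying deterministic correction has the required limit, so that the stated almost sure identity drops out. The independence and uniformity of the family $\{\xi_v:v\in\bV\}$, recorded just before \eqref{eq:xidef}, are what make both the node-by-node computation and the martingale limit available.

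The hard part will be the \emph{conditional} nature of the convergence. The text already warns that $\E\sum_{|u|=k}2^{-|u|}\,|C(\xi_u)|$ is a strictly positive constant independent of $k$, so the series for $L_\infty$ is not absolutely convergent and cannot be estimated or rearranged termwise. The genuinely delicate point is therefore to show that the reorganised node-sum really converges to $L_\infty$, and not merely as a formal rearrangement; here I would rely on the $L^2$-bounded martingale $(L_{\infty,k})_{k\in\bN}$ from Lemma~\ref{lem:l0} rather than on any absolute bound. Equally delicate is reconciling the \emph{empirical} cylinder masses $X_\infty(A_v)$ that appear naturally in the finite-$k$ divergence with the deterministic weights $2^{-|v|}$ carried by the series for $L_\infty$; controlling this interplay, and checking that the difference vanishes along the $\cG_k$, is the step I expect to require the most care.
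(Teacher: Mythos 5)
Your overall strategy---factor $X_\infty(A_u)$ along root-to-$u$ paths, telescope the level-$k$ divergence into a sum over internal nodes, recognise the partial sum $L_{\infty,k}$, and finish with Lemma~\ref{lem:l0}---is indeed the paper's strategy. But the computation you propose starts from the wrong quantity, and the difficulty you flag at the end (``reconciling the empirical cylinder masses $X_\infty(A_v)$ \dots{} with the deterministic weights $2^{-|v|}$'') is not a delicate step to be controlled: it is where the plan breaks. If you expand $\sum_{|u|=k}X_\infty(A_u)\log\bigl(\lambda(A_u)/X_\infty(A_u)\bigr)$ by the chain rule, the coefficient you collect at an internal node $v$ in front of $\log\xi_v$ (resp.\ $\log(1-\xi_v)$) is the random mass $X_\infty(A_{v0})$ (resp.\ $X_\infty(A_{v1})$), not $2^{-|v|-1}$; you obtain $-k\log 2-\sum_{|v|<k}\bigl[X_\infty(A_{v0})\log\xi_v+X_\infty(A_{v1})\log(1-\xi_v)\bigr]$, which is not $L_{\infty,k}$ up to deterministic corrections. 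Worse, this quantity cannot converge to $L_\infty$ at all: it is the negative of the relative entropy of $X_\infty|_{\cG_k}$ with respect to $\lambda|_{\cG_k}$, these relative entropies increase along refining partitions, and since $X_\infty$ and $\lambda$ are mutually singular with probability~1 (as the paper notes immediately after the theorem) they tend to $+\infty$; so your expression tends to $-\infty$ almost surely.

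The identity that actually holds---and is what the paper proves---weights the logarithms by $\lambda$, not by $X_\infty$: with $\phi(u):=2^{-|u|}\log X_\infty(A_u)$ one has, exactly and pathwise for every finite $k$, $L_{\infty,k}=\sum_{|u|<k}2^{-|u|}+\sum_{|u|<k}\bigl(\phi(u0)+\phi(u1)-\phi(u)\bigr)=k+2^{-k}\sum_{|u|=k}\log X_\infty(A_u)$, i.e.\ the level-$k$ divergence computed with the Haar weights $\lambda(A_u)=2^{-k}$ outside the logarithm. Because this is a finite algebraic identity at each $k$---no infinite series is rearranged---the conditional-convergence worries in your third paragraph never arise; the only limit taken is the one Lemma~\ref{lem:l0} already supplies. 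So the missing idea is that the divergence in the statement must be read with $\lambda$ providing the outer weights; with the $X_\infty$ weights you adopted, the claimed identity is false and no amount of care with the martingale from Lemma~\ref{lem:l0} will repair it.
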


\begin{proof} If we restrict the sum in the definition of $L_\infty$ to the nodes of depth less than $k$ then
we obtain with $\phi(u):= 2^{-|u|}\log\bigl(X_\infty(A_u)\bigr)$, using~\eqref{eq:xidef} and a summation by parts
as in the proof of Theorem~\ref{thm:Bfunc2},
\begin{align*}
    \sum_{|u|<k} 2^{-|u|} C(\xi_u)\ 
               &=\ \sum_{|u|< k}2^{-|u|}\biggl(1+2^{-1}\Bigl(\log\frac{X_\infty(A_{u0})}{X_\infty(A_u)} +
                                                                                                 \log\frac{X_\infty(A_{u1})}{X_\infty(A_u)}\Bigr)\biggr)\\
               &=\ \sum_{|u|<k} 2^{-|u|} \; + \; \sum_{|u|<k} \bigl(\phi(u0)+\phi(u1)-\phi(u)\bigr)\\
               &=\ k \; + \; \sum_{|u|=k}\phi(u) \; -\; \phi(\emptyset) \\
               &=\ \sum_{|u|=k} k2^{-|u|} + \sum_{|u|=k} 2^{-k}\log\bigl(X_\infty(A_u)\bigr) 
               \quad =\ \text{\rm KL}(X_\infty|_{\cG_k},\lambda|_{\cG_k}). 
\end{align*}
\end{proof}

\smallbreak
It is tempting to think of the limit as the Kullback-Leibler divergence of $X_\infty$ and $\lambda$. Note, however,
that the density $Z_k:\bV\to [0,\infty)$ of  $X_\infty|_{\cG_k}$ with respect to $\lambda|_{\cG_k}$ is given by
\begin{equation*}
  Z_k(u) \, = \, \frac{X_\infty(A_u)}{2^{-|u|}} \, =\, \prod_{j=1}^k (2\tilde \xi_j), 
\end{equation*}
where $\tilde \xi_j$ is equal to either $\xi_v$ or $1-\xi_v$ with the $\xi$-variables as in~\eqref{eq:xidef}, depending
on the value of the $j$th entry of the sequence $u$, and with $v$ the corresponding length $j-1$ prefix of $u$.  From
this representation as a product of independent, identically distributed and non-degenerate random variables with mean~1
it follows that
\begin{equation*}
  P\bigl(\liminf_{k\to\infty}\, Z_k(u)=0\bigr)\, = \, P\bigl(\limsup_{k\to\infty}\, Z_k(u)=\infty\bigr)\, = \, 1. 
\end{equation*}
In particular,  $X_\infty$ and $\lambda$ are mutually singular with probability 1. Clearly, some cancellation occurs in the sum
defining $L_\infty$, due to the fact that $X_\infty$ is a random measure.
xs
\vspace{5mm}

\begin{figure}[h]
  \begin{center}
      \includegraphics[width=5.5cm,height=5.5cm]{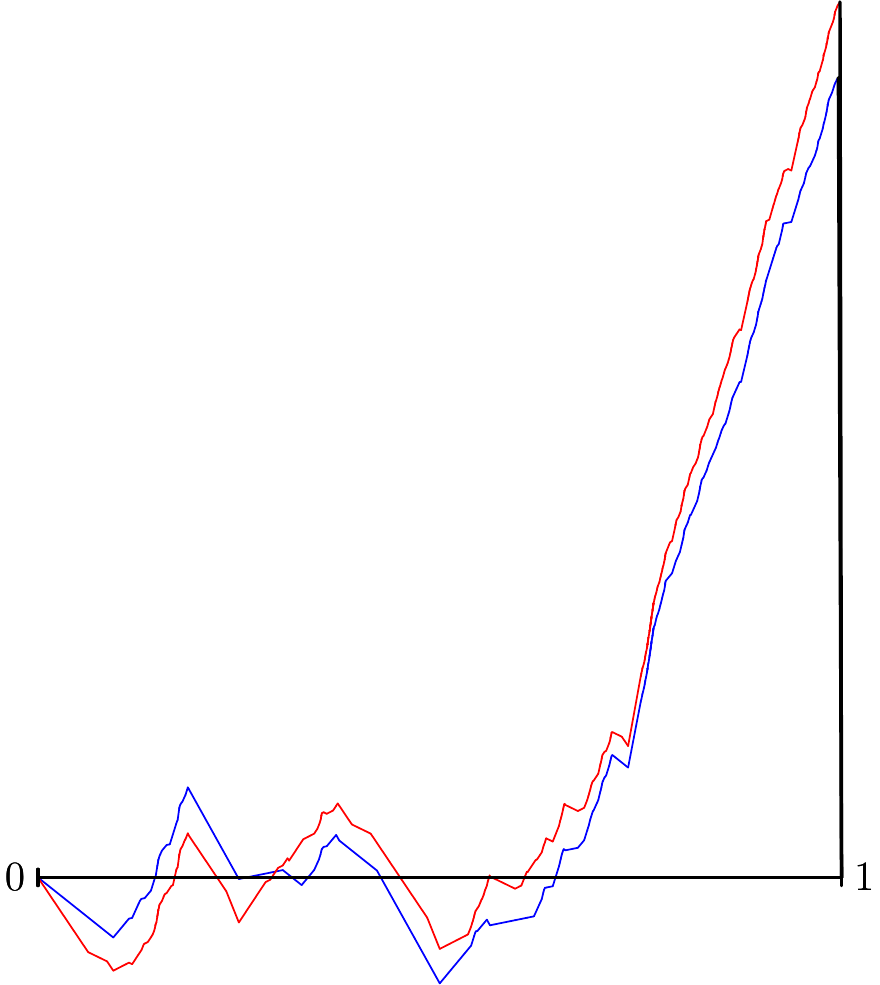} \hspace{1cm}
      \includegraphics[width=5.5cm,height=5.5cm]{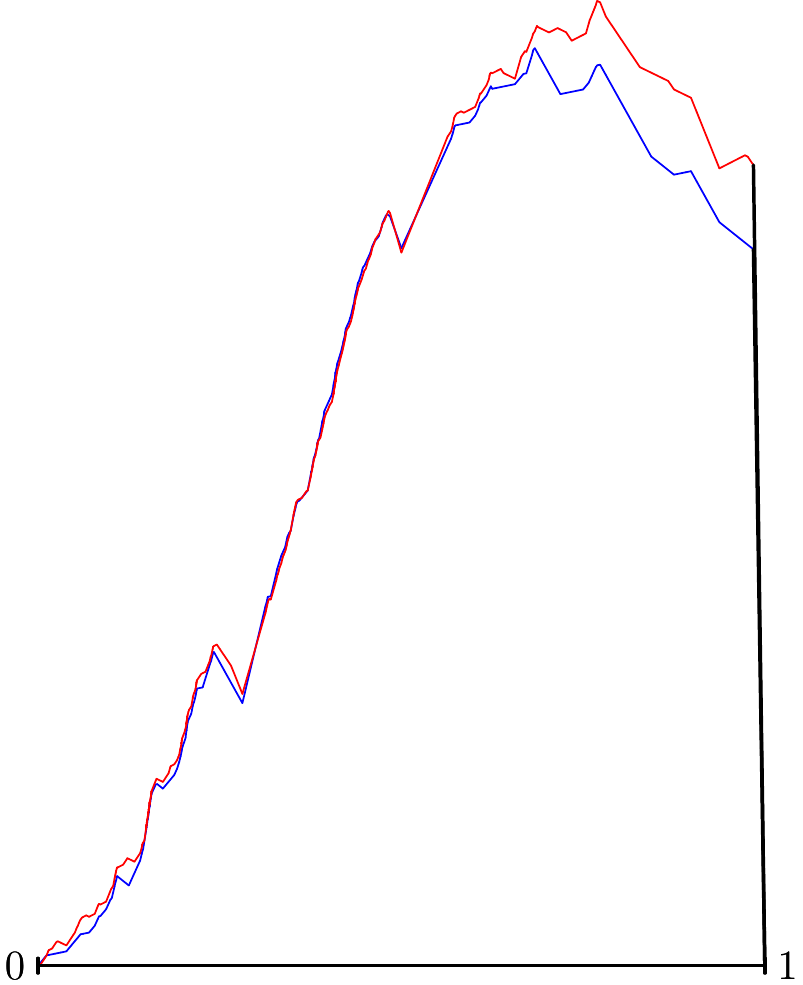}
 \end{center}
\caption{Two values of $Y_n$, with $n=500$ (blue) and $n=1000$ (red).}\label{fig:silh}
\end{figure}

We return to the boundary functions. From Theorem~\ref{thm:Bfunc1} it is clear that we cannot expect these to converge
pointwise; see also Figure~\ref{fig:rawSilh}. Further, the asymptotic normality in~\eqref{eq:asnorm} shows that, at a
specific point,  the
functions increase roughly as $\log n$ but that there are fluctuations of the order $\sqrt{\log n}$.  Hence, apart from
shifting, some smoothing is needed, as has already been noticed in~\citep{GrSilh}.  We first adapt the smoothing
procedure introduced in~\citep{GrSilh} to $\partial\bV$ instead of $[0,1]$ as domain of the random functions. For this we
define a total order on $\partial\bV$ by setting $u\prec v$ for $u=(u_k)_{k\in\bN}$, $v=(v_k)_{k\in\bN}$, with
$u\not=v$, if and only if $u_l=0$ and $v_l=1$ in the first position $l$ where the sequences differ. With the IDLA
connection in mind we are now led to normalizing and smoothing $B_{X_n}$ to $Y_n$ given by
\begin{equation*}
   Y_n(u) := \int_{v\prec u} \bigl(B_{X_n}(v)-H(n)\bigr)\, \lambda(dv)
\end{equation*}
(recall that the $n$th harmonic number is the expectation of $B_{X_n}(v)$ for each $v\in\partial\bV$).
It is easy to deduce from~\cite[Theorem~8]{GrSilh} that $Y_n$ converges in distribution to a process with continuous 
paths. However, as the $Y_n$'s are all defined on the same probability
space, it makes sense to ask whether these variables themselves converge.

Figure~\ref{fig:silh} shows the values of $Y_n(\omega)$ for two $\omega$'s, with $n=500$ and $n=1000$ respectively,
where instead of two such $\omega$'s in the left and the right part of the figure two separate streams of numbers were
used that the present author regards as plausible substitutes for truly random numbers: The two sequences were generated
from alternating blocks of ten digits in the decimal expansion of $\pi-3$, so that the left stream begins with
$t_1=0.1415926535$, $t_2=0.2643383279$, $\ldots$, whereas the right stream has $t_1=0.8979323846$, $t_2=0.5028841971$
and so on.  As in Figure~\ref{fig:rawSilh}, $\partial\bV\,$ is mapped to $[0,1]$ by the function $\beta$ defined
in~\eqref{eq:defbeta} in order to be able to draw the functions.

The figure supports the conjecture that the random functions $Y_n$ themselves converge, and that the limit is not a
fixed function. Incidentally, it also demonstrates the influence of the first few values on the output of the BST
algorithm: The long-term proportion $\# X_n((0))/n$ of nodes in the left subtree is equal to the value of the first input
variable, for example, and for the above $\pi$-data the $t_1$-values are quite different.

The theorem below confirms this conjecture. The theorem also provides a representation of the limit process
in terms of the Doob-Martin limit $X_\infty$ of the BST sequence and, in fact, its proof is closely connected to this
representation. 

We need to specify what convergence of the random functions $Y_n$ means. For this, we define a metric $d$ on $\partial\bV$ by
$d(u,v) = 2^{-l+1}$ for $u,v\in\partial\bV$, $u\not=v$, where $l$ is the first coordinate in which the two sequences
differ as in the definition of the total order on $\partial\bV$; also, $l-1=|u\wedge v|$ where $u\wedge v$ denotes the
longest common prefix (last common ancestor) of $u$ and $v$. This turns $\partial\bV$ into a compact metric space; we
write $C(\partial\bV)$ for the set of continuous functions on $(\partial\bV,d)$. Endowed with the supremum norm,
$\|f\|_\infty=\sup_{u\in\bV} |f(u)|$, $C(\partial\bV)$ becomes a Banach space.  

Further, for $u=(u_k)_{k\in\bN}\in\partial\bV$ let $K(u):=\{k\in\bN:\, u_k=1\}$, and $u(k):=(u_1,\ldots,u_{k-1},0)$
for $k\in K(u)$.  Generalizing the notation introduced above in connection with the second functional of the boundary
functions we write
\begin{equation}\label{eq:defLinfty}
  L_\infty(u):= \sum_{v\ge u} 2^{-|v|} C(\xi_v), \ \ u\in\bV,
\end{equation}
where `$\ge$' now refers to prefix order.
Clearly, $L_\infty(u)$ has the same distribution as $2^{-|u|}L_\infty(\emptyset)$, and we know from Lemma~\ref{lem:l0} that
$L_\infty(\emptyset)=L_\infty$ has zero mean and finite variance.

We require two auxiliary results. 

\begin{lemma}\label{lem:l1} 
\emph{(a)} For each $u\in\partial\bV$, the random variables 
\begin{equation*}
  Y'_{\infty,m}(u):= \sum_{k\in K(u),k\le m} L_\infty(u(k)), \ \  m\in\bN,
\end{equation*}
converge almost surely and in $L^2$ as $m\to\infty$.   

\smallbreak
\emph{(b)} For each $u\in\partial\bV$, the random variables 
\begin{equation*}
  Y''_{\infty,m}(u):= \sum_{k\in K(u),k\le m} 2^{-k} \log\bigl(2^kX_\infty(A_{u(k)})\bigr), \ \ m\in\bN,
\end{equation*}
converge almost surely and in $L^2$ as $m\to\infty$. 
\end{lemma}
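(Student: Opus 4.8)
The plan is to prove both parts by a single elementary device: I would bound the $L^2$-norm of the $k$th summand by a summable sequence and then read off almost sure and $L^2$ convergence at once. Recall that if random variables $a_k$ satisfy $\sum_k \|a_k\|_2 < \infty$, then the partial sums of $\sum_k a_k$ are Cauchy in the complete space $L^2$ and hence converge there, while $\sum_k \E|a_k| \le \sum_k \|a_k\|_2 < \infty$ gives, by Tonelli, $\E\sum_k|a_k| < \infty$, so that $\sum_k|a_k| < \infty$ almost surely and the series converges absolutely with probability one. The point worth stressing is that this route requires no independence of the summands, which is convenient because in part (b) the terms are genuinely dependent. Note also that summing only along the single path $u$ is exactly what makes absolute convergence available here, in contrast to the full series defining $L_\infty$, where it fails.

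For part (a) the required bound is immediate from the scaling identity recorded just before the lemma. Since $|u(k)| = k$, the variable $L_\infty(u(k))$ has the same distribution as $2^{-k}L_\infty$, whence $\|L_\infty(u(k))\|_2 = 2^{-k}\|L_\infty\|_2$ with $\|L_\infty\|_2 < \infty$ by Lemma~\ref{lem:l0}. Therefore $\sum_{k\in K(u)} \|L_\infty(u(k))\|_2 \le \|L_\infty\|_2 \sum_{k=1}^\infty 2^{-k} < \infty$, and the general device applies directly.

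For part (b) I would first rewrite the summand using the product representation of $X_\infty(A_v)$ from the discussion of Kullback--Leibler divergence: a node $v$ of length $d$ satisfies $2^d X_\infty(A_v) = \prod_{j=1}^d (2\tilde\xi_j)$, where each $\tilde\xi_j$ equals either $\xi_w$ or $1-\xi_w$ for the length $j-1$ prefix $w$ of $v$. Applied to $v = u(k)$, which has length $k$, this gives $\log\bigl(2^k X_\infty(A_{u(k)})\bigr) = \sum_{j=1}^k \log(2\tilde\xi_j)$. The prefixes $w$ occurring here have lengths $0,\ldots,k-1$ and are pairwise distinct, so the $\xi$-variables involved are independent and each $\tilde\xi_j$ is uniform on $[0,1]$. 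A short calculation then yields $\E\log(2\tilde\xi_j) = \log 2 - 1$ and $\var\bigl(\log(2\tilde\xi_j)\bigr) = \var(\log\tilde\xi_j) = 1$, so $\log\bigl(2^k X_\infty(A_{u(k)})\bigr)$ has mean $k(\log 2 - 1)$ and variance $k$. Consequently the $L^2$-norm of the $k$th summand is $2^{-k}\bigl(k^2(\log 2 - 1)^2 + k\bigr)^{1/2} = O(k\,2^{-k})$, which is summable, and the general device again applies.

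The genuinely substantive step is the variance bookkeeping in part (b): one must recognize that, after passing to the product representation, $\log\bigl(2^k X_\infty(A_{u(k)})\bigr)$ decomposes into $k$ independent contributions, so that its second moment grows only polynomially in $k$ while the prefactor $2^{-k}$ decays geometrically. Once this balance is secured both assertions follow immediately, and no martingale or three-series argument is needed; the independence I invoke enters only to compute a variance, not to drive the convergence.
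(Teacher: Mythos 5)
Your proof is correct and follows essentially the same route as the paper: both arguments rest on the bounds $\|L_\infty(u(k))\|_2 = 2^{-k}\|L_\infty\|_2$ (from the scaling identity preceding the lemma) and $\bigl\|\log\bigl(2^kX_\infty(A_{u(k)})\bigr)\bigr\|_2 = O(k)$ (from the representation of $\log X_\infty(A_{u(k)})$ as a sum of $k$ independent uniform-logarithms), so that the summands have geometrically decaying $L^2$-norms. The only difference is in the last step and is cosmetic: the paper extracts almost sure convergence from the $L^2$ Cauchy rate of the tails via Chebyshev and Borel--Cantelli, whereas you obtain it (together with the slightly stronger conclusion of absolute convergence along the path) from $\E\sum_k|a_k|\le\sum_k\|a_k\|_2<\infty$; both deductions are standard and valid.
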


\begin{proof} (a) For all $m,n\in\bN$ we have
\begin{equation*}
   \bigl\|Y'_{\infty,m}(u)-Y'_{\infty,n}(u)\bigr\|_2\; 
     =\; \E\,\Bigl\|\sum_{\substack{k\in K(u),\\ m\wedge n < k\le m\vee n}} L_\infty\bigl(u(k)\bigr)\Bigr\|_2
     \; \le \; 2^{-(m\wedge n-1)/2} \, \|L_\infty\|_2.
\end{equation*}
This shows that that $(Y'_{\infty,m}(u))_{m\in\bN}$ is a Cauchy sequence in $L^2$ and that, with $Y'_\infty$ the limit,
\begin{equation*}
   \E\bigl(Y'_{\infty,m} -Y'_\infty\bigr)^2 = O(2^{-m}).
\end{equation*}
In particular, using Chebyshev's inequality, we get
\begin{equation*}
  \sum_{m=1}^\infty P(|Y'_{\infty,m} -Y'_\infty|>\epsilon) < \infty\ \text{ for all } \epsilon>0,
\end{equation*}
which implies almost sure convergence. 

(b) It follows from~\eqref{eq:xidef} that 
\begin{equation*}
  \log X_\infty(A_u)  = \sum_{j=1}^{|u|} \log \tilde\xi_j,
\end{equation*}
with $\tilde \xi_1, \ldots,\tilde \xi_{|u|}$ independent and uniformly distributed on the unit interval; see also the 
discussion following Theorem~\ref{thm:KL}. This
leads to
\begin{equation*}
  \bigl\| \log\bigl(2^{-|u|} X_\infty(A_u))\bigr\|_2 = O(|u|).
\end{equation*}
Using this bound on the $L^2$-norm of the individual summands we can now proceed as in 
the proof of part~(a).
\end{proof}

In view of Lemma~\ref{lem:l1} it makes sense to define two random functions
$Y_\infty'=(Y'_\infty(u))_{u\in\partial\bV}$ and  $Y_\infty''=(Y''_\infty(u))_{u\in\partial\bV}$ 
on $\partial\bV$ by
\begin{equation}\label{eq:defYY}
  Y'_\infty(u) = \sum_{k\in K(u)} L_\infty(u(k)), \quad Y''_\infty(u)= \sum_{k\in K(u)} 2^{-k} \log\bigl(2^kX_\infty(A_{u(k)})\bigr).
\end{equation}
The random functions $Y_\infty'$ and $Y_\infty''$
may also be regarded as stochastic processes with time parameter $u\in\partial \bV$.

\begin{lemma}\label{lem:l2}
\emph{(a)} With probability~1, the processes $Y'_\infty$ and $Y''_\infty$ have continuous paths.  

\smallbreak
\emph{(b)} Both processes are integrable in the sense that 
\begin{equation*}
  \E\|Y'_\infty\|_\infty<\infty\ \text{ and }\ \E\|Y''_\infty\|_\infty<\infty.
\end{equation*}
\end{lemma}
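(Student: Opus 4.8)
The plan is to realize $Y'_\infty$ and $Y''_\infty$ as uniform almost sure limits of continuous functions, so that part~(a) follows from the fact that a uniform limit of continuous functions on the compact metric space $(\partial\bV,d)$ is again continuous. For $m\in\bN$ write $Y'_{\infty,m}$ and $Y''_{\infty,m}$ for the partial sums introduced in Lemma~\ref{lem:l1}. Since the summand indexed by $k$ depends on $u$ only through its first $k$ coordinates, each of $Y'_{\infty,m}$ and $Y''_{\infty,m}$ depends on $u$ only through $u_1,\ldots,u_m$; in the ultrametric $d$ such a function is locally constant, hence continuous. Moreover, part~(b) will follow from the same estimate, since $\|Y'_\infty\|_\infty$ and $\|Y''_\infty\|_\infty$ are bounded by the series that control the remainders.

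The central estimate controls the tails uniformly in $u$. For the first process, since $|u(k)|=k$ for $k\in K(u)$, I would bound
\begin{equation*}
  \sup_{u\in\partial\bV}\bigl|Y'_\infty(u)-Y'_{\infty,m}(u)\bigr|
   \ \le\ \sum_{k>m} M'_k,\qquad
   M'_k:=\max\bigl\{|L_\infty(w)|:\, |w|=k,\ w \text{ ends in } 0\bigr\},
\end{equation*}
and analogously $\sup_u|Y''_\infty(u)-Y''_{\infty,m}(u)|\le\sum_{k>m}M''_k$ with $M''_k:=2^{-k}\max\{|\log(2^kX_\infty(A_w))|:|w|=k\}$. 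Both bounds are valid because a fixed $u$ selects at most one word of each length. It therefore suffices to show $\sum_k\E M'_k<\infty$ and $\sum_k\E M''_k<\infty$: this makes $\sum_k M'_k$ and $\sum_k M''_k$ finite almost surely, so the remainders tend to $0$ uniformly, giving~(a); and it simultaneously yields $\E\|Y'_\infty\|_\infty\le\sum_k\E M'_k<\infty$ and $\E\|Y''_\infty\|_\infty\le\sum_k\E M''_k<\infty$, which is~(b).

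The maxima are estimated by a standard moment generating function bound: if $\E e^{s|Z|}\le c$ for the individual summands, then $\E\max_{i\le N}|Z_i|\le s^{-1}\log(Nc)$ by Jensen's inequality. For $M'_k$ the words of length $k$ ending in $0$ sit at the roots of disjoint subtrees, so the $L_\infty(w)$ are independent copies of $2^{-k}L_\infty$; there are $2^{k-1}$ of them, and the main point is that $L_\infty$ is exponentially integrable, $\E e^{s|L_\infty|}<\infty$ for small $s>0$. This I would obtain from the product form $\E e^{sL_\infty}=\prod_v\E e^{s2^{-|v|}C(\xi_v)}$: each factor is finite (an explicit Beta integral), the logarithm of the generic factor is $O\bigl((s2^{-|v|})^2\bigr)$ because $\E C(\xi_v)=0$ (Lemma~\ref{lem:l0}), and $\sum_v 2^{-2|v|}=\sum_j 2^{-j}<\infty$, so the product converges; the same argument applies to $\E e^{-sL_\infty}$. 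This gives $\E M'_k\le 2^{-k}\,s^{-1}\log(2^{k-1}c)=O(k2^{-k})$, which is summable. For $M''_k$ one uses~\eqref{eq:xidef} to write $\log(2^kX_\infty(A_w))=\sum_{j=1}^k\log(2\tilde\xi_j)$ as a sum of $k$ independent log-uniform terms whose moment generating function $\E(2\xi)^a=2^a/(a+1)$ is finite near $0$; although the $2^k$ quantities share prefixes and are not independent, the Jensen bound needs only the marginal control $\E e^{s|\log(2^kX_\infty(A_w))|}\le c^k$, whence $\E M''_k\le 2^{-k}s^{-1}\log(2^k c^k)=O(k2^{-k})$ is again summable. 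The one genuine obstacle is the exponential integrability of $L_\infty$ (and the analogous $c^k$ bound for the log-measure increments); everything else is bookkeeping around the two summability statements.
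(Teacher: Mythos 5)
Your argument is correct, and its skeleton -- controlling the level-wise maxima of $|L_\infty(v)|$ and of $2^{-k}|\log(2^kX_\infty(A_v))|$ and summing over levels -- is the same as the paper's; the two halves are, however, executed differently. For continuity the paper runs a chaining argument: from $\E\bigl(\sum_k 2^{k/2}\rho_k^2\bigr)<\infty$ it gets $\rho_k\le C(\omega)2^{-k/4}$ and hence a H\"older modulus $|Y'_\infty(u)-Y'_\infty(v)|\le 13\,C(\omega)\,d(u,v)^{1/4}$, whereas you deduce continuity from uniform convergence of the locally constant partial sums $Y'_{\infty,m}$; your route is cleaner if one only wants continuity, but it forfeits the quantitative modulus (the paper remarks that tightening the chaining yields path properties beyond continuity), while it has the small bonus that a.s.\ finiteness of $\sum_k M'_k$ defines $Y'_\infty(u)$ simultaneously for all $u$ on one null-complement via an absolutely convergent series. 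For the level maxima of $L_\infty(\cdot)$ the paper uses only second moments, $\E\rho_k^2\le\sum_{|v|=k}\var(L_\infty(v))=2^{-k}\var(L_\infty)$, exploiting independence across a level and the scaling of $L_\infty(v)$; you instead prove exponential integrability of $L_\infty$ via the infinite product of moment generating functions and then apply the Jensen/union bound. Your resulting estimate $\E M'_k=O(k2^{-k})$ is sharper than the paper's $O(2^{-k/2})$, but it costs an extra lemma ($\E e^{s|L_\infty|}<\infty$ for small $s$) that the paper does not need; your sketch of that lemma is sound (centring gives $\log\E e^{aC(\xi)}=O(a^2)$ uniformly for $|a|\le s<2$, and $\sum_v 4^{-|v|}<\infty$), with the minor remark that Fatou only yields the inequality $\E e^{sL_\infty}\le\prod_v\E e^{s2^{-|v|}C(\xi_v)}$, which is all you need. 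For $Y''_\infty$ the paper's branching-random-walk bound $P(\max_{|u|=k}Y_u\ge t)\le e^{-\theta t}m(\theta)^k$ is exactly the union-plus-Chernoff computation underlying your Jensen bound (and, as you correctly note, no independence across words of a level is required there), so on that half the two proofs coincide in substance.
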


\begin{proof} We consider $Y'_\infty$ first. 
For the proof of continuity we adapt the well-known chaining argument, see e.g.~\cite [p.35]{Kall}, to the present situation. 
Let 
\begin{equation*}
  \rho_k := \max\bigl\{|L_\infty(v)|:\, v\in\bV, \, |v|=k\bigr\}, \ \ k\in\bN.
\end{equation*}
For nodes $v$ on a fixed level $k$ the variables $L_\infty(v)$  are independent. Using 
$\rho_k^2\le \sum_{|v|=k}L_\infty(v)^2$ we obtain
\begin{equation*}
  \E\rho_k^2 \le \sum_{|v|=k} {\E}L_\infty(v)^2
         = \sum_{|v|=k} \var(L_\infty(v)) = 2^{-k} \var(L_\infty),
\end{equation*}
from which it follows that
\begin{equation}\label{eq:chainbound}
  \E\Bigl(\sum_{k=1}^\infty 2^{k/2}\rho_k^2\Bigr) <  \infty.
\end{equation}
This implies that on a set $A$ of probability~1 we have
\begin{equation*}
  \rho_k(\omega) \le C(\omega) \, 2^{-k/4} \ \text{ for all } k\in\bN,\, \omega\in A,
\end{equation*}
with some $C(\omega)<\infty$ that does not depend on $k$. Suppose now that $u,v\in\bV$ are such that $d(u,v)\le
\epsilon$. Then the first $l=l(\epsilon)=\lceil -\log_2(\epsilon)\rceil$ entries of $u$ and $v$ coincide, so that
their connecting path does not go below height~$l$. With Lemma~\ref{lem:l1} and the triangle inequality we therefore get
\begin{equation*}
  \bigl| Y'_\infty(u)(\omega) - Y'_\infty(v)(\omega)\bigr| \; \le\; 2\sum_{k=l+1}^\infty \rho_k(\omega)
               \; \le \; 13\, C(\omega) \, \epsilon^{1/4}
\end{equation*}
whenever $\omega\in A$. This implies that almost all paths of $Y'_\infty$ are continuous.

For the proof of integrability we first note that $\|Y'_\infty\|_\infty\le \sum_{k=1}^\infty
\rho_k$. Using~\eqref{eq:chainbound} and
\begin{equation*}
  \Bigl(\sum_{k=1}^\infty |a_k|\, |b_k|\Bigr)^2\, \le \, \Bigl(\sum_{k=1}^\infty a_k^2\Bigr) \,\Bigl(\sum_{k=1}^\infty b_k^2\Bigr) 
\end{equation*}
with $a_k=2^{-k/4}$ we see that the upper bound has finite mean. 

As in the proof of the previous lemma, the arguments used for $Y_\infty'$ can 
be transferred to the other process $Y_\infty''$: We now put
\begin{equation*}
  \sigma_k := \max\bigl\{2^{-k} \bigl|\log(2^k X_\infty(A_v))\bigr|:\, v\in\bV, \, |v|=k\bigr\}, \ \ k\in\bN,
\end{equation*}
and again, we will show that these decrease rapidly enough as $k\to\infty$. However, we no longer have independence 
of the individual random variables in the maximum, so we need a different argument. As in~\citep{GrMtree} in connection with
the maximum of the probabilities $X_\infty(A_u)$, $|u|=k$, we use the connection to the branching random walks discussed 
by~\cite{Biggins}.   This rests upon the observation that the variables
\begin{equation*}
  Y_u := -\log X_\infty(A_u), \quad |u|=k,
\end{equation*}
are the positions of the members of the $k$th generation in a branching random walk
with offspring distribution $\delta_2$, meaning that each particle has exactly two descendants, and with
\begin{equation*}
   Z:= \delta_{-\log\xi} +\delta_{-\log(1-\xi)},\quad \cL(\xi)=\unif(0,1),
\end{equation*}
for the point process of the positions of the children relative to their parent. Let
\begin{equation}\label{eq:mfunc}
   m(\theta) := \E\Bigl(\int e^{\theta t}\, Z(dt)\Bigr)\, =\, \frac{2}{1-\theta}, \quad \theta<1,
\end{equation}
and let $Z_+^{(k)}(t)$ be the number of particles in generation $k$ that are located to the right of $t$. 
The random measure $Z^{(k)}$ is the $k$th convolution power of $Z$, which leads to
\begin{equation*}
  \E\Bigl(\,\int e^{\theta t} Z^{(k)}(dt)\Bigr)\, = \, m(\theta)^k,
\end{equation*}
so that 
\begin{align*}
  P\bigl(\max \{Y_{u}:\, |u|=k\} \ge  t\bigr)\ &\le \  {\E}Z_+^{(k)}(t) \\ 
               &\le\ e^{-\theta t}\, \E\Bigl(\,\int_{[t,\infty)} e^{\theta s}Z^{(k)}(ds)\Bigr)\\
               &\le \; e^{-\theta t} \, m(\theta)^k 
\end{align*} 
for all  $t\in\bR$, $0\le \theta < 1$ and $k\in\bN$.  Using~\eqref{eq:mfunc} we get,
with $\theta=1/2$,
\begin{align*}
    \E\bigl(\max \{Y_{u}:\, |u|=k\}\bigr)
              \ &= \ \int_0^\infty P\bigl(\max \{Y_{u}:\, |u|=k\}\ge t \bigr)\, dt\\
                 &\le\ \Bigl(\frac{5}{4}\Bigr)^k \; +\; \int_{(5/4)^k}^\infty e^{-t/2} \,4^k\, dt\\
                 &=\ O\biggl(\Bigl(\frac{4}{3}\Bigr)^k\biggr),
\end{align*}
so that, for some constant $C<\infty$,
\begin{equation*}
  \E\biggl(\sum_{k=1}^\infty \Bigl(\frac{5}{4}\Bigr)^k\sigma_k\biggr)
         \ \le \ C\, \sum_{k=1}^\infty \Bigl(\frac{5}{4}\Bigr)^k 2^{-k} \, \E\bigl(\max \{Y_{u}:\, |u|=k\}\bigr)
        \ <\ \infty.
\end{equation*}
Using this instead of~\eqref{eq:chainbound} we can now proceed as in the first part of the proof.
\end{proof}

There is obviously room to spare in the above chaining inequalities; tightening these leads to path
properties beyond continuity.

In the proof of our final result we will use infinite-dimensional martingales; see~\cite[Chapter~V-2]{NeveuMart}.
For this, we require separability of the Banach space $\bigl(C(\partial\bV),\|\cdot\|_\infty\bigr)$: The sets $A_u$,
$u\in\bV$, are closed and open in $\partial\bV$, so their indicator functions are continuous. Moreover, the
intersection of two such sets is again of this form, and the indicator functions separate the points of $\partial\bV$.
The required separability now follows on using the Stone-Weierstra{\ss} theorem. 

\begin{theorem}\label{thm:isil}
With probability~1, $Y_n$ converges in $\bigl(C(\partial\bV),\|\cdot\|_\infty\bigr)$ to $Y_\infty:=Y'_\infty+ Y''_\infty\,$ as $n\to\infty$.
\end{theorem}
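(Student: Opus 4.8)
The plan is to prove almost-sure convergence of $Y_n$ to $Y_\infty = Y'_\infty + Y''_\infty$ in $\bigl(C(\partial\bV),\|\cdot\|_\infty\bigr)$ by exhibiting $Y_\infty$ as the natural limit of the conditional expectations $\E[Y_\infty\,|\,\cF_n]$ and then identifying these conditional expectations with $Y_n$ up to a negligible correction. This mirrors the strategy of Theorem~\ref{thm:Bfunc2}, but now at the level of the whole $C(\partial\bV)$-valued process rather than a single integral. The separability of $\bigl(C(\partial\bV),\|\cdot\|_\infty\bigr)$ established just above, together with the integrability $\E\|Y_\infty\|_\infty<\infty$ from Lemma~\ref{lem:l2}(b), is exactly what is needed to invoke the vector-valued martingale convergence theorem from~\cite[Chapter~V-2]{NeveuMart}: the Banach-space martingale $\bigl(\E[Y_\infty\,|\,\cF_n]\bigr)_{n\in\bN}$ converges almost surely in norm to $Y_\infty$, since $Y_\infty$ is $\cT(X)$-measurable (being a function of $X_\infty$) and the filtration $(\cF_n)$ generates this tail information in the limit.

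First I would compute $\E[Y_\infty\,|\,\cF_n]$ explicitly. Writing $Y_\infty(u) = \sum_{k\in K(u)} \bigl(L_\infty(u(k)) + 2^{-k}\log(2^kX_\infty(A_{u(k)}))\bigr)$ and using the Markov property $\E[\,\cdot\,|\cF_n]=\E[\,\cdot\,|X_n]$ for functionals of $X_\infty$, I would apply the conditional-expectation identities already derived in the proof of Theorem~\ref{thm:Bfunc2}: namely $\E[C(\xi_v)|X_n]=0$ for $v\notin X_n$, and the harmonic-number formula for $v\in X_n$, which feeds into $L_\infty(u(k))$ through~\eqref{eq:defLinfty}. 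The corresponding computation for the $Y''_\infty$ part uses $\E[\log X_\infty(A_v)|X_n] = H(\#X_n(v))-H(\#X_n(v)+1)+\cdots$ type identities from~\cite{GrMtree}. The point of this step is to show that $\E[Y_\infty\,|\,\cF_n](u)$ collapses to an expression involving only the finite tree $X_n$, in which the logarithmic harmonic-number terms combine by summation by parts (as in~\eqref{eq:intrewrite} and Theorem~\ref{thm:Bfunc2}) to reproduce the smoothed, centred boundary function $Y_n(u)=\int_{v\prec u}(B_{X_n}(v)-H(n))\,\lambda(dv)$, up to a deterministic or $L^1$-vanishing remainder.

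The main obstacle I expect is the second step: controlling convergence in the supremum norm rather than pointwise. Almost-sure pointwise convergence $Y_n(u)\to Y_\infty(u)$ for each fixed $u$ follows from the scalar martingale arguments, but upgrading to uniform convergence requires that the $C(\partial\bV)$-valued martingale structure genuinely holds—that $\E[Y_\infty\,|\,\cF_n]$ is an element of $C(\partial\bV)$ and that the vector-valued convergence theorem applies with the correct norm. Here the continuity estimates from Lemma~\ref{lem:l2}(a), obtained by chaining, provide uniform modulus-of-continuity control that transfers to the conditional expectations, ensuring the limiting object lands in $C(\partial\bV)$ and that no mass escapes between the dyadic levels. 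The remaining care is to verify that the discrepancy $Y_n - \E[Y_\infty\,|\,\cF_n]$ vanishes in $\|\cdot\|_\infty$ almost surely; I would bound its supremum by a sum over levels $k$ of maxima of the form controlled in Lemma~\ref{lem:l2}, whose expectations are summable, so that a Borel--Cantelli argument closes the gap. Combining the vector martingale convergence with this vanishing discrepancy yields $Y_n\to Y_\infty$ almost surely in $\bigl(C(\partial\bV),\|\cdot\|_\infty\bigr)$.
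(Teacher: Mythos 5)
Your proposal follows essentially the same route as the paper: identify $Y_n$ with the conditional expectation $\E[Y_\infty\,|\,\cF_n]$ via the harmonic-number identities and a summation by parts as in Theorem~\ref{thm:Bfunc2}, and conclude with the $C(\partial\bV)$-valued martingale convergence theorem of Neveu, using the separability of the space and the integrability and path continuity from Lemma~\ref{lem:l2}. The only part you over-engineer is the final ``discrepancy'' step: the computation gives $Y_n(u)=\E[Y'_\infty(u)\,|\,\cF_n]+\E[Y''_\infty(u)\,|\,\cF_n]$ exactly (the key identity being $\E[\log X_\infty(A_u)\,|\,\cF_n]=H(\#X_n(u))-H(n)$, proved by induction along prefixes), so there is no remainder to control and no Borel--Cantelli argument is needed.
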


\begin{proof} For all $u\in\bV$, $n\in\bN$,
 \begin{align*}
  \int_{A_u} B_{X_n}\, d\lambda \ &=\ \sum_{v\in\partial X_n, v\ge u} |v| \, 2^{-|v|} \\
                   &=\ \sum_{w\in \partial X_n(u)} (|u|+|w|)\, 2^{-|u|-|w|}\\
                   &=\ 2^{-|u|}\Bigl(|u|\sum_{w\in\partial X_n(u)}2^{-|w|} + \sum_{w\in\partial X_n(u)}|w|\,2^{-|w|}\Bigr)\\
                   &=\ 2^{-|u|}\Bigl(|u| \; + \sum_{w\in X_n(u)}2^{-|w|}\Bigr) \\
                   &=\ |u|\, 2^{-|u|} \; + \sum_{v\in X_n, v\ge u} 2^{-|v|}.
\end{align*}
Further, with $L_\infty(u)$ as in~\eqref{eq:defLinfty}, 
\begin{align*}
   \E[L_\infty(u)|\cF_n] \ &=\ \E\Bigl[\sum_{v\in\bV,v\ge u} 2^{-|v|} C(\xi_v)\Big| \cF_n\Bigr]\\
                                     &=\ \sum_{v\in X_n, v\ge u} \Bigl(2^{-|v|} \, - \, \bigl(\psi_n(u)-\psi_n(u0)-\psi_n(u1)\bigr)\Bigr)\\
                                     &=\ \sum_{v\in X_n(u),v\ge u} 2^{-|v|} \; -\;  2^{-|u|} H \bigl(\#X_n(u)\bigr), 
\end{align*}
where we have used the same notation and the same arguments as in the proof of~\eqref{eq:Zrewrite}. Combining these we
get, for all $u\in\bV$,
\begin{equation}\label{eq:decomp0}
  \int_{A_u} \bigl(B_{X_n} - H(n)\bigr)\, d\lambda \; 
           = \; \E [L_\infty(u)|\cF_n] \, + \, 2^{-|u|}\bigl( H(\#X_n(u))-H(n) + |u|\bigr).
\end{equation}
Now let $u=(u_k)_{k\in\bN}\in\partial\bV\,$ be such that $\# K(u)<\infty$. The integration range appearing in the
definition of $Y_n(u)$ can be decomposed as follows, 
\begin{equation*}
  I_u:=\{v\in\partial\bV:\, v\prec u\} = \sum_{k\in K(u)} A_{u(k)}.
\end{equation*}
With~\eqref{eq:decomp0} we thus obtain, setting $Y'_n := \E\bigl[Y'_\infty\big|\cF_n\bigr]$,
\begin{align*}
        Y_n(u) \ &=\ \sum_{k\in K(u)} \int_{A_u} \bigl(B_{X_n} - H(n)\bigr)d\lambda \\
              &=\ Y'_n(u) \, +\, \sum_{k\in K(u)}2^{-k}\bigl( H(\#X_n(u(k)))-H(n)+k\bigr).
\end{align*}
From Lemma~\ref{lem:l2} we know that $Y'_\infty$ is a
$C(\partial\bV)$-valued integrable random variable. Hence $(Y'_n,\cF_n)_{n\in\bN}$ is a martingale with values in the
separable Banach space $\bigl(C(\partial\bV),\|\cdot\|_\infty\bigr)$, and by~\cite[Proposition V.2.6]{NeveuMart} $Y'_n$
converges almost surely in this space to $Y'_\infty$ as $n\to\infty$.

It remains to prove that, as $n\to\infty$,
\begin{equation}\label{eq:Hconv}
  \sum_{k\in K(u)}2^{-k}\bigl( H(\#X_n(u))-H(n)+k\bigr)\; \to \; Y_\infty''(u),
\end{equation}
with probability~1 and, with both sides regarded as functions of $u$, in $\bigl(C(\partial\bV),\|\cdot\|_\infty\bigr)$. 
For this, we first show that 
\begin{equation}\label{eq:condexplog}
  \E\bigl[\log\bigl(X_\infty(A_u)\big|\cF_n\bigr]\; =\, H\bigl(\#X_n(u)\bigr) - H(n)\quad \text{for all } u\in\bV,\, n\in\bN.
\end{equation}
Clearly, for $u=\emptyset$, both sides of~\eqref{eq:condexplog} are equal to 0. 
If $u=(u_1,\ldots,u_k,u_{k+1})$ with $u_{k+1}=0$ then, from~\eqref{eq:xidef},
\begin{equation*}
  \log X_\infty(A_u) \, = \; \log X_\infty(A_{\bar u}) + \log\xi_{\bar u}.
\end{equation*}
Further, we know from the proof of Theorem~\ref{thm:Bfunc2} that
\begin{equation*}
  \E[\log \xi_{\bar u}|\cF_n]\, = \, H\bigl(\#X_n(\bar u0)\bigr)-H\bigl(\#X_n(\bar u)\bigr).
\end{equation*}
Hence, if~\eqref{eq:condexplog} holds for $\bar u$, then so it does for $u$. The same arguments work in the case 
$u_{k+1}=1$, with $1-\xi_{\bar u}$ and $\#X_n(\bar u1)$ instead of $\xi_{\bar u}$ and $\#X_n(\bar u0)$ respectively.
This completes the induction proof for~\eqref{eq:condexplog}.

As in the first part of the proof we now get
\begin{equation*}
  \E\bigl[Y''_\infty(u)\big|\cF_n\bigr]\, 
        = \, \sum_{k\in K(u)} 2^{-k}\Bigl(H\bigl(\#X_n(u(k))\bigr)-H(n)+k\Bigr).
\end{equation*}
From this \eqref{eq:Hconv} follows on using the infinite-dimensional martingale convergence theorem again.
\end{proof}

\section{Comments and complements}
\label{sec:compl}
We collect some references to related work and also put the above results into a larger perspective.

\smallbreak
{\bf (a)} The approach of the present paper is not limited to graphs and search trees but may be
used quite generally in the context of combinatorial Markov chains. For an elementary
introduction to such processes and their boundaries, with many examples and algorithms, see~\cite{GrKMK}
(written in simple German).

\smallbreak
{\bf (b)} In concrete cases, the results provided by a general method such as the Doob-Martin approach 
can often be obtained more directly, using the additional structures then present. For example,   in~\cite{GrMtree} 
a proof of the basic BST result from~\cite{EGW1} is given that is based on the BST algorithm; this direct approach also 
leads to a representation of $X_\infty$ in terms of the input sequence. Obviously, the same applies to the theory
of graph limits, but the exposition of a common structure provided by a general theory may lead to a deeper 
understanding of such individual cases. 

\smallbreak
{\bf (c)} As seen above, the boundary theory approach may lead to strong
limit theorems for discrete structures and their functionals, occasionally improving on previous results.
In~\cite{GrMtree} such an amplification from convergence in distribution to convergence of the random variables is
carried out for the Wiener index of search trees, where distributional convergence had earlier been obtained
by~\cite{Nein02} with the contraction method. In both cases it is instructive to compare the proofs, which are quite
different and seem to be less involved for the stronger result (once the Doob-Martin compactification has been 
worked out). The records chain provides an example where we have distributional convergence 
with a non-degenerate limit, but where a strong limit is necessarily degenerate, i.e.~constant. 

\smallbreak
{\bf (d)} At a qualitative level functionals of discrete random structures may have a non-trivial tail $\sigma$-field,
which we interpret as persisting randomness, or they may not, even if the structures themselves show such a
persistence; see Theorem~\ref{thm:Bfunc1}.  A similar phenomenon has been observed in connection with the 
subtree size profile of binary search trees by~\cite{DeGr3}. 

\bigbreak
{\bf Acknowledgements}. I thank Steve Evans, Klaas Hagemann, Anton Wakolbinger and Wolfgang Woess for helpful
discussions. Further, I am grateful to the referee for comments that have led to numerous improvements. A talk based 
on the material of this paper was given at the 2014 AofA conference in Paris; I also thank the
participants for their feedback.

\bibliographystyle{abbrvnat}
\bibliography{persistRDS-DMTCS-FINAL}

\begin{thebibliography}{27}
\providecommand{\natexlab}[1]{#1}
\providecommand{\url}[1]{\texttt{#1}}
\expandafter\ifx\csname urlstyle\endcsname\relax
  \providecommand{\doi}[1]{doi: #1}\else
  \providecommand{\doi}{doi: \begingroup \urlstyle{rm}\Url}\fi

\bibitem[Biggins(1977)]{Biggins}
J.~D. Biggins.
\newblock Chernoff's theorem in the branching random walk.
\newblock \emph{J. Appl. Probability}, 14\penalty0 (3):\penalty0 630--636,
  1977.

\bibitem[Blackwell and Kendall(1964)]{BK1964}
D.~Blackwell and D.~Kendall.
\newblock The {M}artin boundary of {P}\'olya's urn scheme, and an application
  to stochastic population growth.
\newblock \emph{J. Appl. Probability}, 1:\penalty0 284--296, 1964.

\bibitem[Dennert and Gr{\"u}bel(2010)]{DeGr3}
F.~Dennert and R.~Gr{\"u}bel.
\newblock On the subtree size profile of binary search trees.
\newblock \emph{Combin. Probab. Comput.}, 19\penalty0 (4):\penalty0 561--578,
  2010.

\bibitem[Diaconis and Fulton(1991)]{DiacFul}
P.~Diaconis and W.~Fulton.
\newblock A growth model, a game, an algebra, {L}agrange inversion, and
  characteristic classes.
\newblock \emph{Rend. Sem. Mat. Univ. Politec. Torino}, 49\penalty0
  (1):\penalty0 95--119 (1993), 1991.
\newblock Commutative algebra and algebraic geometry, II (Italian) (Turin,
  1990).

\bibitem[Doob(1959)]{Doob59}
J.~L. Doob.
\newblock Discrete potential theory and boundaries.
\newblock \emph{J. Math. Mech.}, 8:\penalty0 433--458; erratum 993, 1959.

\bibitem[Drmota(2009)]{Drmota09}
M.~Drmota.
\newblock \emph{Random trees. {A}n interplay between combinatorics and
  probability}.
\newblock Springer, Wien, 2009.

\bibitem[Evans et~al.(2012)Evans, Gr{\"u}bel, and Wakolbinger]{EGW1}
S.~N. Evans, R.~Gr{\"u}bel, and A.~Wakolbinger.
\newblock Trickle-down processes and their boundaries.
\newblock \emph{Electron. J. Probab.}, 17:\penalty0 no. 1, 58, 2012.

\bibitem[{Evans} et~al.(2014){Evans}, {Gr{\"u}bel}, and {Wakolbinger}]{EGW2}
S.~N. {Evans}, R.~{Gr{\"u}bel}, and A.~{Wakolbinger}.
\newblock {Doob--Martin boundary of R\'emy's tree growth chain}.
\newblock \emph{ArXiv e-prints}, Nov. 2014.

\bibitem[Gnedin and Pitman(2005)]{GnPit}
A.~Gnedin and J.~Pitman.
\newblock Exchangeable {G}ibbs partitions and {S}tirling triangles.
\newblock \emph{Zap. Nauchn. Sem. S.-Peterburg. Otdel. Mat. Inst. Steklov.
  (POMI)}, 325\penalty0 (Teor. Predst. Din. Sist. Komb. i Algoritm. Metody.
  12):\penalty0 83--102, 244--245, 2005.

\bibitem[Gr{\"u}bel(2005)]{GruePoiss}
R.~Gr{\"u}bel.
\newblock A hooray for {P}oisson approximation.
\newblock In \emph{2005 {I}nternational {C}onference on {A}nalysis of
  {A}lgorithms}, Discrete Math. Theor. Comput. Sci. Proc., AD, pages 181--191
  (electronic). Assoc. Discrete Math. Theor. Comput. Sci., Nancy, 2005.

\bibitem[Gr{\"u}bel(2009)]{GrSilh}
R.~Gr{\"u}bel.
\newblock On the silhouette of binary search trees.
\newblock \emph{Ann. Appl. Probab.}, 19\penalty0 (5):\penalty0 1781--1802,
  2009.

\bibitem[Gr{\"u}bel(2013)]{GrKMK}
R.~Gr{\"u}bel.
\newblock Kombinatorische {M}arkov-{K}etten.
\newblock \emph{Math. Semesterber.}, 60\penalty0 (2):\penalty0 185--215, 2013.

\bibitem[Gr{\"u}bel(2014)]{GrMtree}
R.~Gr{\"u}bel.
\newblock Search trees: Metric aspects and strong limit theorems.
\newblock \emph{Ann. Appl. Probab.}, 24:\penalty0 1269--1297, 2014.

\bibitem[Huss and Sava(2012)]{Huss}
W.~Huss and E.~Sava.
\newblock Internal aggregation models on comb lattices.
\newblock \emph{Electron. J. Probab.}, 17:\penalty0 no. 30, 21, 2012.

\bibitem[Janson et~al.(2000)Janson, {\L}uczak, and Rucinski]{buchJLR}
S.~Janson, T.~{\L}uczak, and A.~Rucinski.
\newblock \emph{Random Graphs}.
\newblock Wiley, New York, 2000.

\bibitem[Ka{\u\i}manovich and Vershik(1983)]{KaimVersh}
V.~A. Ka{\u\i}manovich and A.~M. Vershik.
\newblock Random walks on discrete groups: boundary and entropy.
\newblock \emph{Ann. Probab.}, 11\penalty0 (3):\penalty0 457--490, 1983.

\bibitem[Kallenberg(1997)]{Kall}
O.~Kallenberg.
\newblock \emph{Foundations of modern probability}.
\newblock Springer, New York, 1997.

\bibitem[Kelley(1955)]{Kel55}
J.~L. Kelley.
\newblock \emph{General topology}.
\newblock D. Van Nostrand Company, Inc., Toronto-New York-London, 1955.

\bibitem[Knuth(1973)]{Knuth3}
D.~E. Knuth.
\newblock \emph{The art of computer programming. {V}olume 3, {S}orting and
  searching}.
\newblock Addison-Wesley Publishing Co., Reading, Mass.-London-Don Mills, Ont.,
  1973.

\bibitem[Lawler et~al.(1992)Lawler, Bramson, and Griffeath]{Lawletal}
G.~F. Lawler, M.~Bramson, and D.~Griffeath.
\newblock Internal diffusion limited aggregation.
\newblock \emph{Ann. Probab.}, 20\penalty0 (4):\penalty0 2117--2140, 1992.

\bibitem[Lov{\'a}sz(2012)]{LovaszBook}
L.~Lov{\'a}sz.
\newblock \emph{Large networks and graph limits}, volume~60 of \emph{American
  Mathematical Society Colloquium Publications}.
\newblock American Mathematical Society, Providence, RI, 2012.

\bibitem[Mahmoud(1992)]{Mahmoud1}
H.~M. Mahmoud.
\newblock \emph{Evolution of random search trees}.
\newblock John Wiley \& Sons Inc., New York, 1992.

\bibitem[Neininger(2002)]{Nein02}
R.~Neininger.
\newblock The {W}iener index of random trees.
\newblock \emph{Combin. Probab. Comput.}, 11\penalty0 (6):\penalty0 587--597,
  2002.

\bibitem[Neveu(1975)]{NeveuMart}
J.~Neveu.
\newblock \emph{Discrete-parameter martingales}.
\newblock North-Holland, Amsterdam, revised edition, 1975.

\bibitem[Sawyer(1997)]{Saw97}
S.~A. Sawyer.
\newblock Martin boundaries and random walks.
\newblock In \emph{Harmonic functions on trees and buildings ({N}ew {Y}ork,
  1995)}, volume 206 of \emph{Contemp. Math.}, pages 17--44. Amer. Math. Soc.,
  Providence, RI, 1997.

\bibitem[Woess(2000)]{Woess1}
W.~Woess.
\newblock \emph{Random walks on infinite graphs and groups}.
\newblock Cambridge University Press, Cambridge, 2000.

\bibitem[Woess(2009)]{Woess2}
W.~Woess.
\newblock \emph{Denumerable {M}arkov chains. {G}enerating functions, boundary
  theory, random walks on trees}.
\newblock European Mathematical Society (EMS), Z\"urich, 2009.

\end{thebibliography}

\end{document}